\numberwithin{equation}{section}
\numberwithin{figure}{section}
\newtheorem{thm}{Theorem}[section]
\newtheorem{conj}[thm]{Conjecture}
\newtheorem{obs}[thm]{Observation}
\newtheorem{cor}[thm]{Corollary}
\newtheorem{lem}[thm]{Lemma}
\newtheorem{prop}[thm]{Proposition}
\renewenvironment{proof}[1][]{\par
 \pushQED{\qed}%
 \normalfont \topsep6\p@\@plus6\p@\relax
 \trivlist
 \item\relax
 {\itshape
 #1\@addpunct{.}}\hspace\labelsep\ignorespaces
 }{%
 \popQED\endtrivlist\@endpefalse
 }
\begin{document}
\begin{center}
	{\large \bf A symmetric function approach to log-concavity of independence polynomials}
\end{center}

\begin{center}
Ethan Y.H. Li$^{1}$, Grace M.X. Li$^{2}$,  Arthur L.B. Yang$^{3}$ and Zhong-Xue Zhang$^{4}$\\[6pt]
\end{center}

\begin{center}
$^{1}$School of Mathematics and Statistics,\\
Shaanxi Normal University, Xi'an, Shaanxi 710119, P. R. China

$^{2}$School of Mathematics and Data Science, \\
Shaanxi University of Science and
Technology, Xi’an, Shaanxi 710021, P. R. China

$^{3,4}$Center for Combinatorics, LPMC\\
Nankai University, Tianjin 300071, P. R. China\\[6pt]

Email: $^{1}${\tt yinhao\_li@snnu.edu.cn}, $^{2}${\tt grace\_li@sust.edu.cn}, $^3${\tt yang@nankai.edu.cn}, $^{4}${\tt zhzhx@mail.nankai.edu.cn}
\end{center}

\noindent\textbf{Abstract.}
As introduced by Gutman and Harary, the independence polynomial of a graph serves as the generating polynomial of its independent sets. In 1987, Alavi, Malde, Schwenk and Erd\H{o}s conjectured that the independence polynomials of all trees are unimodal. In this paper we come up with a new way for proving log-concavity of independence polynomials of graphs by means of their chromatic symmetric functions, which is inspired by a result of Stanley connecting properties of polynomials to positivity of symmetric functions. This method turns out to be more suitable for treating trees with irregular structures, and as a simple application we show that all spiders have log-concave independence polynomials,  which provides more evidence for the above conjecture. Moreover, we present two symmetric function analogues of a basic recurrence formula for independence polynomials, and show that all pineapple graphs also have log-concave independence polynomials.

\noindent \emph{AMS Mathematics Subject Classification 2020:} 05C69, 05E05, 05C05, 05C15

\noindent \emph{Keywords:} independence polynomial; log-concavity; unimodality; chromatic symmetric function; Schur-positivity; spider

\section{Introduction}\label{sec-intro}

Given a graph $G = (V(G),E(G))$, an independent set (or stable set) $I$ is defined as a subset of $V(G)$ such that any two vertices of $I$ are not adjacent. Let $\alpha(G)$ denote the size of maximum independent set(s) of $G$.
The \textit{independence polynomial} (also called \textit{stable set polynomial}) of $G$, introduced by Gutman and Harary \cite{GH83}, is defined as
\[
I_G(t) = i_0+i_1t+\cdots+i_{\alpha(G)}t^{\alpha(G)},
\]
where $i_j$ denotes the number of independent sets of $G$ with cardinality $j$ and by convention $i_0 = 1$.

In 1987, Alavi, Malde, Schwenk and Erd\H{o}s \cite{AMSE87} further studied these polynomials and particularly investigated their unimodality. Recall that if $I_G(t)$ is of degree $d$ and there exists an index $0 \le j \le d$ such that $i_0 \le \cdots\le i_{j-1} \le i_j \ge i_{j+1} \ge \cdots \ge i_d$, then it is said to be \textit{unimodal}. They made the following conjecture.
\begin{conj}[{\cite[Problem 3]{AMSE87}}]\label{conj-uni}
The independence polynomial of every tree is unimodal.
\end{conj}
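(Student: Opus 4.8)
Since Conjecture~\ref{conj-uni} is a long-standing open problem, what follows is a strategy rather than a complete argument, meant to locate the obstruction. The governing idea --- the one the abstract attributes to Stanley --- is that if a one-variable polynomial can be lifted to a homogeneous symmetric function which is positive in a good basis, then the lift's specializations inherit strong structural constraints: in particular, a Schur-positive homogeneous symmetric function becomes, under the principal specialization $x_i\mapsto q^{i-1}$, a polynomial that is symmetric and unimodal, because each $s_\lambda(1,q,\dots,q^{m-1})$ is itself a symmetric unimodal polynomial with a common centre of symmetry (the Gaussian-binomial unimodality coming from hard Lefschetz / $\mathfrak{sl}_2$-theory). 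So the plan is to realize the suitably shifted and rescaled independence polynomial of a tree as such a specialization, and then read off unimodality.

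Concretely, the attack would proceed in three steps. \emph{Step 1 (the lift):} to a graph $G$ on vertex set $[n]$ attach a degree-$n$ symmetric function $\mathcal I_G$ distilled from Stanley's chromatic symmetric function $X_G$ --- most naturally from its expansion in the augmented monomial basis, whose coefficients count partitions of $V(G)$ into independent sets of prescribed sizes --- together with a master identity: a suitable specialization of $\mathcal I_G$ (or the extraction of a suitable coefficient) returns $I_G(t)$ up to an explicit monomial factor, so that positivity of $\mathcal I_G$ descends to $I_G(t)$. \emph{Step 2 (positivity):} show $\mathcal I_T$ is Schur-positive for every tree $T$ by induction on $|V(T)|$, using a symmetric-function analogue of the basic pruning recurrence $I_G(t)=I_{G-v}(t)+t\,I_{G-N[v]}(t)$ at a leaf $v$, lifted to an identity $\mathcal I_T=(\ast)\,\mathcal I_{T-v}+(\ast\ast)\,\mathcal I_{T-N[v]}$ whose correction factors $(\ast),(\ast\ast)$ preserve positivity while the two smaller forests carry the inductive hypothesis. \emph{Step 3 (conclusion):} feed the Schur-positive $\mathcal I_T$ into the transfer principle of the first paragraph.

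The main obstacle is Step~2, and it is a genuine one, for two compounding reasons. First, the correction factors produced by the lifted recurrence are not automatically Schur-positive, so positivity is not visibly closed under the recursion --- this is harmless only when the factors are simple enough to be analyzed by hand. Second, deleting a closed neighbourhood $N[v]$ from an arbitrary tree produces an essentially unconstrained forest, so the induction ranges over all trees with no extra structure to exploit. This is exactly why the method yields unconditional conclusions only for families whose pruning recursion stays inside a controllable class: for a spider, every $T-v$ and $T-N[v]$ is a disjoint union of paths and shorter spiders; for a pineapple graph, the recursion only ever meets near-complete graphs carrying a few pendant edges --- in both cases the relevant symmetric functions can be pinned down explicitly and shown positive, giving log-concavity (hence unimodality) of $I_T(t)$. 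Settling the conjecture for all trees would require a new, structure-free source of the needed positivity, or a way to run the recursion without it.
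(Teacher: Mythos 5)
You correctly recognize that Conjecture~\ref{conj-uni} is open and that the paper proves only special cases, and your diagnosis of the obstruction at ``Step~2'' is essentially the paper's own: the lifted recurrence (Proposition~\ref{prop-rec}) carries an error term $\sum_\alpha X_G^\alpha$ that need not be positive, so positivity is not visibly closed under pruning, and deleting $N[v]$ from a general tree gives an unconstrained forest. But your ``Step~1 $\to$ Step~3'' transfer mechanism is not the paper's, and as stated it does not work. You propose to lift to a \emph{homogeneous} symmetric function distilled from $X_G$ and then invoke the principal specialization $x_i\mapsto q^{i-1}$: a Schur-positive homogeneous symmetric function does indeed specialize to a \emph{symmetric} (palindromic) unimodal polynomial in $q$. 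That last adjective is fatal here: independence polynomials of trees are essentially never palindromic (e.g.\ $I_{P_3}(t)=1+3t+t^2$ is, but $I_{P_4}(t)=1+4t+3t^2$ is not), so $I_G(t)$ cannot be obtained by principal specialization of a homogeneous Schur-positive lift up to a monomial factor. The specialization that actually returns $I_G(t)$ from the relevant symmetric function --- setting $x_1=t$ and $x_i=0$ for $i\ge 2$ in $\prod_i I_G(x_i)$ --- kills the information the hard-Lefschetz/$\mathfrak{sl}_2$ argument would provide. So Step~3, as you have written it, has no route from Schur-positivity to unimodality of $I_G(t)$.

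The paper's mechanism is different and sharper. It works with the \emph{inhomogeneous} symmetric function $Y_G=\sum_\alpha X_G^\alpha=\prod_i I_G(x_i)$ rather than with $X_G$ itself, and instead of principal specialization it uses the Cauchy identity plus the dual Jacobi--Trudi identity (Theorems~\ref{thm-spos-rr} and~\ref{thm-2s-lc}) to show that the coefficient of $s_{(k,k)}$ in $Y_G$ equals $i_k^2-i_{k-1}i_{k+1}$ exactly; hence $2$-Schur-positivity of $Y_G$ is \emph{equivalent} to (strong) log-concavity of $I_G(t)$, with no loss of information and no palindromicity requirement. Also, your closing remark slightly misreads how the special cases are handled: the spider case (Theorem~\ref{thm-spi-lc}) is proved not by running the recurrence but by a direct analysis of every $X_{S(\lambda)}^\alpha$ and an explicit injection $\phi$ matching each potentially negative term with a compensating positive one; the recurrence of Proposition~\ref{prop-rec-clan} is invoked only for the pineapple graphs. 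So if you want to salvage your strategy, replace the principal-specialization transfer with the paper's $Y_G$/Cauchy machinery, and replace the inductive Step~2 with the cancellation-by-injection scheme; the genuine open problem is then exactly what you say --- finding a structure-free source of cancellation that works for all trees.
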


This celebrated conjecture has received considerable attention. For more recent work on this conjecture,  see \cite{BES18,BG21,Ben18,BH24,BC18,GH18,Ham90,HLMP24,LM03,ZZ23,Zhu22,ZW20} and references therein.

In particular, it was surmised that these polynomials are actually log-concave \cite{LM04}, which is a stronger property than unimodality and requires that $i_j^2 \ge i_{j-1}i_{j+1}$ for any $1 \le j \le d-1$. However, Kadrawi, Levit, Yosef and Mizrachi \cite{KL23, KLYM23} presented many counterexamples with unimodal but not log-concave independence polynomials. The real-rootedness of independence polynomials, a stronger property than log-concavity, has also been studied extensively;
see \cite{BHN04,CS07,HL72,LTZ25,WZ11,ZZ24,Zhu07} for instance.

In this paper we proceed to investigate this conjecture by means of chromatic symmetric functions of graphs. In particular, we find a new way for proving the log-concavity of independence polynomials, which is inspired by the following result of Stanley.

Let $s_{\lambda}(\mathbf{x})$ and $e_{\lambda}(\mathbf{x})$  denote the Schur function and the elementary symmetric function in the variables $\mathbf{x} = \{x_1,x_2,\ldots\}$ respectively.

Let $P(t)$ be a polynomial with real coefficients satisfying $P(0) = 1$, say
$$P(t) =a_0 + a_1t + \cdots + a_dt^d.$$
Stanley defined an inhomogeneous symmetric function
$$F_P(\mathbf{x}) = \prod_{i\geq 1} P(x_i)$$ and established a relation between real-rootedness of $P(t)$ and positivity of $F_P(\mathbf{x})$.

\begin{thm}[{\cite[Theorem 2.11]{Sta98}}]\label{thm-spos-rr}
Let $P(t)$ and $F_P(\mathbf{x})$ be defined as above. Then the following conditions are equivalent:
\begin{itemize}
  \item The coefficient of $s_{\lambda}(\mathbf{x})$ in $F_P(\mathbf{x})$ is nonnegative for each integer partition $\lambda$.
  \item The coefficient of $e_{\lambda}(\mathbf{x})$ in $F_P(\mathbf{x})$ is nonnegative for each integer partition $\lambda$.
  \item All zeros of $P(t)$ are negative real numbers.
\end{itemize}
\end{thm}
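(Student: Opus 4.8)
The plan is to prove the three conditions equivalent via the cycle ``negative real zeros $\Rightarrow$ $e$-nonnegativity of $F_P$ $\Rightarrow$ Schur-nonnegativity of $F_P$ $\Rightarrow$ negative real zeros''. Of these, the middle implication is immediate: writing $e_\lambda(\mathbf{x})=\prod_i s_{(1^{\lambda_i})}(\mathbf{x})$ exhibits each $e_\lambda$ as a product of Schur functions, which is Schur-positive by the Littlewood--Richardson rule, so any $e$-nonnegative symmetric function is automatically Schur-nonnegative. The substance lies in the first and third implications.

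For the first implication, suppose all zeros of $P(t)$ are negative reals. Since $P(0)=1$, one can factor $P(t)=\prod_{k=1}^{d}(1+r_k t)$ with every $r_k>0$, whence
\[
F_P(\mathbf{x})=\prod_{i\ge 1}\prod_{k=1}^{d}(1+r_k x_i)=\prod_{k=1}^{d}\Bigl(\sum_{j\ge 0}r_k^{\,j}e_j(\mathbf{x})\Bigr).
\]
Each factor is a nonnegative linear combination of elementary symmetric functions, and since $e_\mu e_\nu=e_{\mu\cup\nu}$ a product of $e$-nonnegative symmetric functions is again $e$-nonnegative; hence $F_P(\mathbf{x})$ is $e$-nonnegative.

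For the third implication, assume $F_P(\mathbf{x})$ is Schur-nonnegative; I want all zeros of $P(t)$ to be negative reals. We may assume $\deg P=d$ and factor $P(t)=\prod_{j=1}^{d}(1+\alpha_j t)$ over $\mathbb{C}$, so the $\alpha_j$ are the negated reciprocals of the zeros and are all nonzero; the desired conclusion is that every $\alpha_j$ is a positive real. The dual Cauchy identity gives
\[
F_P(\mathbf{x})=\prod_{i\ge 1}\prod_{j=1}^{d}(1+\alpha_j x_i)=\sum_{\lambda}s_\lambda(\mathbf{x})\,s_{\lambda'}(\alpha_1,\dots,\alpha_d),
\]
so the hypothesis says exactly that $s_\mu(\alpha_1,\dots,\alpha_d)\ge 0$ for every partition $\mu$. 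It then remains to show that a tuple of nonzero complex numbers all of whose Schur evaluations are nonnegative must consist of positive reals (the converse being clear, as each $s_\mu$ is monomial-nonnegative). Specializing $\mu=(m)$ gives $h_m(\alpha_1,\dots,\alpha_d)\ge 0$ for all $m$, where $h_m$ is the complete homogeneous symmetric function; thus the rational function $\sum_{m\ge 0}h_m(\alpha_1,\dots,\alpha_d)t^m=\prod_{j=1}^{d}(1-\alpha_j t)^{-1}$ has nonnegative Taylor coefficients, so by Pringsheim's theorem its dominant singularity lies on the positive real axis, forcing $\max_j|\alpha_j|$ to be attained by a positive real $\alpha_{j_0}$. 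I would then peel off the factor $1+\alpha_{j_0}t$ and induct on $d$, showing that the reduced tuple remains Schur-nonnegative. (Alternatively one can finish via the theory of totally positive sequences: the determinants $s_\mu=\det(h_{\mu_i-i+j})$ and, more generally, all minors of the Toeplitz matrix $(h_{j-i})_{i,j}$ are Schur-positive skew-Schur evaluations, so Schur-nonnegativity of the tuple says precisely that $(h_m(\alpha_1,\dots,\alpha_d))_{m\ge 0}$ is a P\'olya frequency sequence; the Aissen--Edrei--Schoenberg--Whitney classification of the generating functions of such sequences then forces $\prod_j(1-\alpha_j t)^{-1}$ to have all $\alpha_j\ge 0$.)

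The main obstacle is the peeling step in the third implication: the branching (Pieri) rule expands $s_\mu(\alpha_1,\dots,\alpha_d)$ in terms of Schur values of the reduced tuple rather than the reverse, so recovering nonnegativity for the reduced tuple requires either a careful asymptotic argument---sending one part of $\mu$ to infinity to isolate the contribution of the reduced tuple---or an appeal to the total-positivity machinery above. Everything else reduces to routine manipulations with standard symmetric-function identities.
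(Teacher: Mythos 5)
Your proof is correct, provided one reads your parenthetical total-positivity / Aissen--Edrei--Schoenberg--Whitney argument as the actual finish to the third implication (you rightly flag the Pringsheim-plus-peeling idea as having an unresolved step, since the Pieri/branching rule runs in the wrong direction for the induction). This is essentially Stanley's original proof, which the present paper cites without reproving but closely imitates in establishing its restricted Theorem~\ref{thm-2s-lc}: the Cauchy identity converts Schur-nonnegativity of $F_P$ into Schur-nonnegativity at the specialization $\theta$, the dual Jacobi--Trudi identity and the Littlewood--Richardson rule identify this with total positivity of the coefficient sequence, and the AESW classification of P\'olya frequency sequences then yields negative real zeros.
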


By restricting this result to the nonnegativity of the coefficients of $s_{\lambda}$ indexed by partitions of length at most 2, which we call \textit{2-Schur-positivity}, we establish the following equivalence relation between 2-Schur-positivity of $F_P(\mathbf{x})$ and log-concavity of $P(t)$.

\begin{thm}\label{thm-2s-lc}
Let $P(t)$ and $F_P(\mathbf{x})$ be defined as above with $a_0,\ldots,a_d$ being positive. Then the following conditions are equivalent:
\begin{itemize}
  \item[\textup{(i)}] The coefficient of $s_{\lambda}$ in $F_P(\mathbf{x})$ is nonnegative for any partition $\lambda$ of length at most~2.
  \item[\textup{(ii)}] The coefficient of $s_{(k,k)}$ in $F_P(\mathbf{x})$ is nonnegative for any $k \ge 1$.
  \item[\textup{(iii)}] $P(t)$ is log-concave.
  \item[\textup{(iv)}] $P(t)$ is strongly log-concave.
\end{itemize}
\end{thm}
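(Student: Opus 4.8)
The plan is to reduce everything to an explicit formula for the coefficients of the Schur functions $s_\lambda$ in $F_P$ with $\ell(\lambda)\le 2$, after which the four conditions become four familiar — and, under the positivity hypothesis, equivalent — ways of asserting log-concavity of $(a_0,\dots,a_d)$. I would start from the monomial expansion
\[
F_P(\mathbf{x}) \;=\; \prod_{i\ge 1}\bigl(a_0 + a_1 x_i + \cdots + a_d x_i^d\bigr) \;=\; \sum_{\mu} a_\mu\, m_\mu(\mathbf{x}),
\]
the sum over all partitions $\mu$, where $a_\mu := \prod_j a_{\mu_j}$ with the conventions $a_0=1$ and $a_k=0$ for $k>d$; this is immediate upon expanding the product and collecting monomials according to the partition formed by their exponents. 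Since $\{h_\mu\}$ and $\{m_\mu\}$ are dual bases for the Hall inner product, this already yields $\langle F_P, h_\nu\rangle = a_\nu = \prod_j a_{\nu_j}$ for every $\nu$ (the pairings read off in each fixed degree). Plugging in the Jacobi–Trudi identities for one- and two-row shapes, $s_{(k)}=h_k$ and $s_{(a,b)}=h_a h_b-h_{a+1}h_{b-1}$ for $a\ge b\ge 1$, and using orthonormality of the Schur functions, I get that the coefficient of $s_{(k)}$ in $F_P$ equals $a_k$ and the coefficient of $s_{(a,b)}$ in $F_P$ equals $a_a a_b - a_{a+1}a_{b-1}$ (the case $b=1$ being absorbed by $a_0=1$).

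Granting these two formulas, the theorem unwinds quickly. The length-$1$ coefficients $a_k$ are nonnegative by hypothesis, so (i) says exactly that $a_a a_b \ge a_{a+1}a_{b-1}$ for all $a\ge b\ge 1$; re-indexing by $i:=b$, $j:=a$ this reads $a_i a_j\ge a_{i-1}a_{j+1}$ for $1\le i\le j$, whose only non-vacuous instances are $j\le d-1$ — that is, (i) is literally (iv) (the instances $j\ge d$ being automatic since then $a_{j+1}=0$). Likewise (ii) is literally "$a_k^2\ge a_{k-1}a_{k+1}$ for all $k\ge 1$", which after discarding the automatic cases $k\ge d$ is (iii). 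It therefore suffices to run the cycle (i) $\Rightarrow$ (ii) $\Rightarrow$ (iii) $\Rightarrow$ (iv) $\Rightarrow$ (i): the first arrow is trivial because $(k,k)$ has length $2$; (ii) $\Leftrightarrow$ (iii) and (i) $\Leftrightarrow$ (iv) were just noted; and (iv) $\Rightarrow$ (iii) is the case $i=j$. The only step carrying real content is (iii) $\Rightarrow$ (iv): for a sequence of \emph{positive} reals, $a_k^2\ge a_{k-1}a_{k+1}$ for $1\le k\le d-1$ forces the ratios $a_{k+1}/a_k$ to be non-increasing on $\{0,\dots,d-1\}$, whence for $1\le i\le j\le d-1$ one has $a_i/a_{i-1}\ge a_{j+1}/a_j$, i.e. $a_i a_j\ge a_{i-1}a_{j+1}$.

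I do not anticipate a genuine obstacle: once the two Schur-coefficient formulas are in place, the rest is a one-line ratio comparison. The points that need care are purely bookkeeping at the boundary — the conventions $a_0=1$ and $a_k=0$ for $k>d$ in the cases $b=1$ and $j\ge d$ — together with making explicit where the hypothesis that $a_0,\dots,a_d$ are positive is used: it is precisely what converts log-concavity into monotonicity of consecutive ratios in (iii) $\Rightarrow$ (iv), and it is also what makes the length-$1$ Schur coefficients automatically nonnegative, so that (i) and (ii) impose constraints only on two-row shapes. (The same duality argument with the full Jacobi–Trudi identity gives, for an arbitrary $\lambda$, the coefficient $\det(a_{\lambda_i-i+j})_{1\le i,j\le \ell(\lambda)}$, which together with the classical characterization of Pólya frequency sequences recovers Theorem~\ref{thm-spos-rr}; but only $\ell(\lambda)\le 2$ is needed here.)
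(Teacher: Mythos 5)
Your proof is correct, and it reaches the key formulas $[s_{(k)}]F_P = a_k$ and $[s_{(a,b)}]F_P = a_a a_b - a_{a+1}a_{b-1}$ by a genuinely different and more elementary route than the paper. The paper, mirroring Stanley's proof of Theorem~\ref{thm-spos-rr}, factors $P(t) = \prod_j (1+\theta_j t)$ over $\mathbb{C}$, applies the dual Cauchy identity to obtain $F_P = \sum_\lambda s_{\lambda'}(\mathbf{\theta})\, s_\lambda(\mathbf{x})$, identifies $a_i = e_i(\mathbf{\theta})$, reads the length-$\le 2$ Schur coefficients off the dual Jacobi--Trudi identity as order-$\le 2$ minors of the Toeplitz matrix $(a_{j-i})$, and then invokes the Littlewood--Richardson rule for the converse inclusion of minors into the span of those coefficients; that is what makes (i)~$\Leftrightarrow$~(iv) go through via Lemma~\ref{lem-2tp}. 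You avoid the factorization and the Littlewood--Richardson step entirely: expand $F_P$ in the monomial basis as $\sum_\mu \bigl(\prod_j a_{\mu_j}\bigr) m_\mu$, use Hall-inner-product duality $\langle m_\mu,h_\nu\rangle=\delta_{\mu\nu}$ to get $\langle F_P,h_\nu\rangle=\prod_j a_{\nu_j}$, then apply the ordinary two-row Jacobi--Trudi identity and orthonormality of Schur functions. This is cleaner, stays over $\mathbb{Q}$, and makes the content of (i) and (ii) transparent. Your cycle (i)~$\Rightarrow$~(ii)~$\Rightarrow$~(iii)~$\Rightarrow$~(iv)~$\Rightarrow$~(i), with (iii)~$\Rightarrow$~(iv) via ratio monotonicity (exactly where positivity of the $a_k$ is used), is sound and matches the paper's closing steps. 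One small imprecision: you assert ``(i) is literally (iv),'' but (i) only yields the one-step inequalities $a_i a_j \ge a_{i-1}a_{j+1}$, whereas the paper's strong log-concavity is the multi-step condition $a_m a_n \ge a_p a_q$ for $m+n=p+q$; the two are equivalent only after chaining or the ratio argument, which your cycle supplies anyway, so nothing breaks — but ``literally'' overstates it.
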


As shown by Stanley \cite{Sta98}, if $P(t)$ is taken to be the independence polynomial $I_G(t)$ of a graph $G$, then $F_P(\mathbf{x})$ is just the symmetric function $Y_G = Y_G(\mathbf{x}) = \sum_{\alpha}
X_G^{\alpha}$ (to be defined in Section \ref{sec-relation}). 
Hence, this result actually establishes an equivalence relation between 2-Schur-positivity of $Y_G$ and the log-concavity of $I_G(t)$.

Based on this result, we come up with a new way for proving log-concavity of $I_G(t)$: firstly we analyze the coefficients of $s_{\lambda}$'s in each $X_G^{\alpha}$ and then show that all the negative terms could be eliminated. Unexpectedly, this method is more suitable for treating trees with irregular structures, and as an application we are able to prove that all spiders
have log-concave independence polynomials, which provides further evidence for Conjecture~\ref{conj-uni}. Precisely, a spider is a tree with only one vertex of degree at least 3 (called the \textit{torso}), or equivalently, a vertex with several paths (called \textit{legs}) pendent to it, see the graph on the left of Figure \ref{fig-sp-pi}. We should remark that in 2004 Levit and Mandrescu \cite{LM04} proved that well-covered spiders have log-concave independence polynomials, where a graph is called well-covered if all of its maximal independent sets are also maximum.

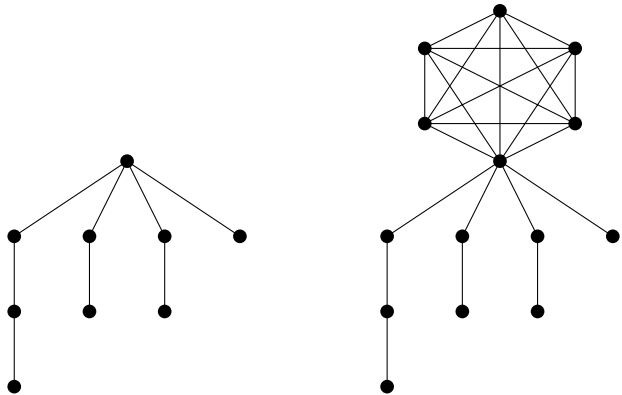
\begin{figure}[htbp]
\centering
\begin{tikzpicture}[scale = 1]

\fill (-6,0) circle (0.5ex);
\fill (-3,0) circle (0.5ex);
\fill (-4,0) circle (0.5ex);
\fill (-4,-1) circle (0.5ex);

\fill (-4.5,1) circle (0.5ex);
\fill (-5,0) circle (0.5ex);
\fill (-6,-1) circle (0.5ex);
\fill (-6,-2) circle (0.5ex);
\fill (-5,-1) circle (0.5ex);
\draw (-4.5,1) -- (-6,0) -- (-6,-1) -- (-6,-2);
\draw (-4.5,1) -- (-5,0) -- (-5,-1);
\draw (-4.5,1) -- (-4,0) -- (-4,-1);
\draw (-4.5,1) -- (-3,0);
\end{tikzpicture}
\hspace{1.5cm}
\begin{tikzpicture}[scale = 1]

\fill (-6,0) circle (0.5ex);
\fill (-3,0) circle (0.5ex);
\fill (-4,0) circle (0.5ex);
\fill (-4,-1) circle (0.5ex);

\fill (-4.5,1) circle (0.5ex);
\fill (-5,0) circle (0.5ex);
\fill (-6,-1) circle (0.5ex);
\fill (-6,-2) circle (0.5ex);
\fill (-5,-1) circle (0.5ex);
\draw (-4.5,1) -- (-6,0) -- (-6,-1) -- (-6,-2);
\draw (-4.5,1) -- (-5,0) -- (-5,-1);
\draw (-4.5,1) -- (-4,0) -- (-4,-1);
\draw (-4.5,1) -- (-3,0);

\fill (-5.5,1.5) circle (0.5ex);
\fill (-3.5,1.5) circle (0.5ex);
\fill (-5.5,2.5) circle (0.5ex);
\fill (-3.5,2.5) circle (0.5ex);
\draw (-4.5,3) -- (-5.5,2.5)  --(-5.5,1.5) -- (-4.5,1) -- (-3.5,1.5) -- (-3.5,2.5) -- (-4.5,3);
\draw (-5.5,2.5) -- (-3.5,2.5) -- (-5.5,1.5) --(-3.5,1.5) --(-5.5,2.5) -- (-4.5,1) -- (-3.5,2.5) ;
\fill (-4.5,3) circle (0.5ex);
\draw (-4.5,1) -- (-4.5,3);
\draw (-3.5,1.5) -- (-4.5,3) -- (-5.5,1.5);
\end{tikzpicture}
\caption{Spider $S(3,2,2,1)$ and pineapple graph $Pi(6,(3,2,2,1))$
}\label{fig-sp-pi}
\end{figure}



Further, we give two symmetric function analogues of the following well-known recurrence formula:
\begin{align}\label{eq-rec}
I_G(t) = I_{G-v}(t) + tI_{G-N[v]}(t),
\end{align}
where $G-v$ is the graph obtained by deleting $v$ from $G$ and $N[v] = \{v\} \cup N(v)$ consists of $v$ and all its neighbors. As an application, we prove that each pineapple graph also has a log-concave independence polynomial, which is obtained by joining a complete graph to the torso of a spider, see the graph on the right of Figure \ref{fig-sp-pi}.

This paper is organized as follows. In Section \ref{sec-relation} we give a proof of Theorem \ref{thm-2s-lc} and derive all the necessary tools. In Section \ref{sec-lc} we prove that the independence polynomials of all spiders are log-concave, present the symmetric function analogues of \eqref{eq-rec}, and use these two results to deduce the log-concavity of independence polynomials of
pineapple graphs.

\section{Schur-positivity and log-concavity}\label{sec-relation}

In this section, we shall introduce some basic definitions and prove Theorem \ref{thm-2s-lc}. After that, we give an explanation to our new method and derive the necessary tools for proving the main results in next section.

Let us first recall some basic terminology in the theory of symmetric functions.
Let $\mathbf{x} = \{x_1,x_2,\ldots\}$ be a set of countably infinite indeterminates. The algebra $\mathbb{Q}[[\mathbf{x}]]$ is defined as the commutative algebra of formal power series in these indeterminates over the rational field $\mathbb{Q}$. The \textit{algebra of symmetric functions} $\Lambda_{\mathbb{Q}}(\mathbf{x})$ is defined to be the subalgebra of $\mathbb{Q}[[\mathbf{x}]]$ consisting of formal power series $f(\mathbf{x})$ of bounded degree and satisfying
\[
f(\mathbf{x})=f(x_1,x_2,\ldots) = f(x_{\omega(1)},x_{\omega(2)},\ldots)
\]
for any permutation $\omega$ of positive integers.

The bases of $\Lambda_{\mathbb{Q}}(\mathbf{x})$ are naturally indexed by (integer) partitions. A \textit{partition} of $n$ is a sequence $\lambda = (\lambda_1,\ldots,\lambda_\ell)$ such that
\[
\lambda_1 \ge \lambda_2 \ge \cdots \ge \lambda_\ell > 0 \quad \mbox{and} \quad \lambda_1 + \lambda_2 + \cdots + \lambda_\ell = n.
\]
The number $\ell=\ell(\lambda)$ is called the \textit{length} of $\lambda$. By identifying $\lambda = (\lambda_1,\ldots,\lambda_\ell)$ with $(\lambda_1,\ldots,\lambda_\ell,0,0,\ldots)$, the \textit{monomial symmetric function} $m_{\lambda}(\mathbf{x})$ is defined as $\sum_{\alpha} x^{\alpha}$, where $\alpha$ ranges over all distinct permutations of $(\lambda_1,\ldots,\lambda_\ell,0,0,\ldots)$ and $x^{\alpha} = x_1^{\alpha_1}x_2^{\alpha_2}\cdots$ for $\alpha = (\alpha_1,\alpha_2,\ldots)$. The \textit{elementary symmetric function} $e_{\lambda}(\mathbf{x})$ is defined as
\begin{align*}
e_{k}(\mathbf{x})& = \sum_{1 \le j_1 < j_2 < \cdots < j_k} x_{j_1}x_{j_2} \cdots x_{j_k}, \mbox{ if } k\geq 1 \quad (e_0=1)\\
e_{\lambda}(\mathbf{x})& = e_{\lambda_1}(\mathbf{x})e_{\lambda_2}(\mathbf{x})\cdots e_{\lambda_\ell}(\mathbf{x}),  \mbox{ if } \lambda = (\lambda_1,\ldots,\lambda_\ell).
\end{align*}
The \textit{Schur function} $s_{\lambda}(\mathbf{x})$ can be defined by the dual Jacobi-Trudi identity
\[
s_{\lambda'}(\mathbf{x}) = \det (e_{\lambda_i-i+j}(\mathbf{x}))_{1 \le i,j \le \ell},
\]
where $\lambda'$ denotes the conjugate partition of $\lambda$, defined by $\lambda'_i = \#\{j \mid \lambda_j \ge i\}$ for $i \ge 1$, and $e_k(\mathbf{x})=0$ for $k<0$. It is well known that $\{m_{\lambda}(\mathbf{x})\},\{e_{\lambda}(\mathbf{x})\}$ and $\{s_{\lambda}(\mathbf{x})\}$ are bases of $\Lambda_{\mathbb{Q}}(\mathbf{x})$. We say that a symmetric function $f(\mathbf{x})$ is \textit{Schur-positive} or simply \textit{$s$-positive} if it
can be written as a nonnegative linear combination of Schur functions. As usual, given a basis $\{u_{\lambda}(\mathbf{x})\}$, we use $[u_{\lambda}]f(\mathbf{x})$ to denote the coefficient of $u_{\lambda}(\mathbf{x})$ in the expansion of $f(\mathbf{x})$ in terms of this basis.
For more information on symmetric functions, see \cite{Mac15,StaEC2}.


Now we turn to prove Theorem \ref{thm-2s-lc}. Recall that a polynomial $a_0 + a_1t + \cdots +a_dt^d$ is said to be log-concave if $a_j^2 \ge a_{j-1}a_{j+1}$ for $1 \le j \le d-1$, and it is said to be strongly log-concave if $a_m a_n \ge a_p a_q$ for all $1 \le p < m \le n < q \le d$ with $m+n = p+q$. It is clear that strong log-concavity implies log-concavity. By definition we have the following lemma.
\begin{lem}\label{lem-2tp}
A polynomial $a_0 + a_1t + \cdots + a_dt^d$ with nonnegative coefficients is strongly log-concave if and only if all the minor of order at most 2 of the Toeplitz matrix $(a_{j-i})_{i,j \ge 0}$ is nonnegative (namely, $(a_{j-i})_{i,j \ge 0}$ is 2-TP).
\end{lem}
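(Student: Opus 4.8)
The plan is to prove this by directly unpacking both conditions into one and the same list of numerical inequalities; there is no genuine inequality to establish here, only an indexing dictionary to set up. First I would dispose of the order-$1$ minors: an entry of the Toeplitz matrix $(a_{j-i})_{i,j\ge 0}$ is just $a_{j-i}$, where we use the standing convention $a_k=0$ for $k<0$ or $k>d$; as $i,j$ range over the nonnegative integers these entries exhaust $a_0,\ldots,a_d$ together with $0$, so nonnegativity of all order-$1$ minors is exactly the hypothesis that the coefficients are nonnegative, and this part of being $2$-TP is automatic.

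The substance lies in the order-$2$ minors. Choosing rows $i_1<i_2$ and columns $j_1<j_2$, the corresponding minor is $a_{j_1-i_1}a_{j_2-i_2}-a_{j_2-i_1}a_{j_1-i_2}$. Writing $\{m,n\}=\{j_1-i_1,\,j_2-i_2\}$ with $m\le n$, together with $p=j_1-i_2$ and $q=j_2-i_1$, a one-line check gives $p<m\le n<q$ and $m+n=p+q$, and the minor equals $a_ma_n-a_pa_q$. For the converse I would show every such quadruple actually arises: given integers $p<m\le n<q$ with $m+n=p+q$, take $i_1=N$, $j_1=N+m$, $i_2=N+(m-p)$ and $j_2=N+q$ with $N$ large enough that all four indices are nonnegative; then $m-p>0$ and $q-m>0$ force $i_1<i_2$ and $j_1<j_2$, and the four Toeplitz entries are $a_m,a_q,a_p,a_n$. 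Hence the order-$2$ minors of the Toeplitz matrix are exactly the differences $a_ma_n-a_pa_q$ as $(m,n,p,q)$ ranges over all integer quadruples with $p<m\le n<q$ and $m+n=p+q$.

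It then remains only to match this list against the inequalities defining strong log-concavity, i.e.\ to check that the quadruples having an index outside $\{0,\ldots,d\}$ contribute nothing new: if $q>d$ or $p<0$ then $a_pa_q=0$, while if $m$ itself falls outside $\{0,\ldots,d\}$ then, using $m\le n$ and $m+n=p+q$, the matching outer index is out of range as well, so both products vanish. What survives is precisely the defining list for strong log-concavity, and combined with the order-$1$ observation this yields the stated equivalence. The only place that requires care — and hence the ``hard'' part, although it is pure bookkeeping — is the two-way correspondence of the middle paragraph: confirming that the four Toeplitz entries land on exactly the indices $m,n,p,q$ with the asserted inequalities among them, and that each strong-log-concavity inequality is realized by a genuine choice of nonnegative row and column indices.
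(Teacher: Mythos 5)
Your proof is correct, and it is exactly the definitional unwinding that the paper has in mind when it says ``By definition we have the following lemma'' and offers no argument; there is nothing substantively different to compare, since the paper treats the statement as immediate.

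One small point worth flagging: you read the range of admissible quadruples in the definition of strong log-concavity as $0 \le p < m \le n < q \le d$, whereas the paper's displayed definition literally writes $1 \le p$. Your reading is the one that makes the lemma true (the Toeplitz minor with $(p,m,n,q)=(0,1,1,2)$ gives $a_1^2 - a_0a_2$, and excluding $p=0$ would let, e.g., $1+0t+0t^2+t^3$ be ``strongly log-concave'' without being $2$-TP), and it is also the reading the paper itself uses in the proof that (iii) implies (iv), where the ratio chain starts at $a_1/a_0$. So the paper's ``$1\le p$'' should be ``$0\le p$''; your bookkeeping, including the observation that out-of-range $p$, $q$, or $m$ makes the relevant product vanish and the explicit row/column choices realizing every in-range quadruple, is sound.
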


Along the lines of Stanley's proof of Theorem \ref{thm-spos-rr}, we next give a proof of Theorem~\ref{thm-2s-lc}.

\begin{proof}[Proof of Theorem \ref{thm-2s-lc}.]
We give a cyclic proof. Let us first prove that (i) is equivalent to (iv). Suppose that $P(t) = \prod_{j=1}^d (1+\theta_j t)$ with $0 \neq \theta_j \in \mathbb{C}$.
Let $\mathbf{y}=\{y_1,y_2,\ldots\}$ and $\mathbf{\theta}=\{\theta_1,\theta_2,\ldots,\theta_d\}$.
By the Cauchy identity
\[
\prod_{i,j} (1+y_jx_i) = \sum_{\lambda} s_{\lambda'}(\mathbf{y})s_{\lambda}(\mathbf{x})
\]
we have
\begin{equation}\label{eq-fpx}
F_P(\mathbf{x}) = \sum_{\lambda} s_{\lambda'}(\mathbf{\theta})s_{\lambda}(\mathbf{x}),
\end{equation}
where $s_{\lambda'}(\mathbf{\theta})$ stands for the specialization of $s_{\lambda'}(\mathbf{y})$ by setting $y_1 = \theta_1, \ldots, y_d = \theta_d$ and $y_{l} = 0$ for $l\geq d+1$.
Note that the coefficient $a_i =[t^i]P(t)$ is equal to $e_i(\mathbf{\theta})$. Moreover, by the dual Jacobi-Trudi identity, all $s_{\lambda'}(\mathbf{\theta})$ with $\ell(\lambda) \le 2$ appear as minors of $(a_{j-i})_{i,j \ge 0}$ of order 1 or 2. Conversely, by the Littlewood-Richardson rule (see \cite{Mac15,StaEC2}), all minors of $(a_{j-i})_{i,j \ge 0}$ of order 1 or 2 are equal to $s_{\mu'/\nu'}(\mathbf{\theta})$ for some $\mu$ and $\nu$ with $\ell(\mu) \le 2$, which is a nonnegative linear combination of $s_{\lambda'}(\mathbf{\theta})$ with $\ell(\lambda) \le 2$. Hence $s_{\lambda'}(\mathbf{\theta}) \ge 0$ for all $\lambda$ with $\ell(\lambda) \le 2$ if and only if $(a_{j-i})_{i,j \ge 0}$ is 2-TP, and the latter is equivalent to the strong log-concavity of $P(t)$ by Lemma \ref{lem-2tp}. 

The implication ``(i) implies (ii)'' is obvious. We now prove that (ii) implies (iii). By \eqref{eq-fpx}, we find that
\[
[s_{(k,k)}]F_P(\mathbf{x}) = s_{(k,k)'}(\mathbf{\theta}) = \begin{vmatrix}
                                           e_k(\theta) & e_{k+1}(\theta) \\
                                           e_{k-1}(\theta) & e_k(\theta)
                                         \end{vmatrix} = a_k^2 - a_{k-1}a_{k+1} \ge 0.
\]
This proves (iii).

The final implication ``(iii) implies (iv)'' is also known. Indeed, if $a_i^2 - a_{i-1}a_{i+1} \ge 0$ for all $i \ge 1$, then
\[
\frac{a_1}{a_0} \ge \frac{a_2}{a_1} \ge \cdots \ge \frac{a_i}{a_{i-1}} \ge \frac{a_{i+1}}{a_i} \ge \cdots \ge \frac{a_d}{a_{d-1}},
\]
since all $a_i$ are positive. Hence $P(t)$ is strongly log-concave. 
\end{proof}

From now on we restrict our attention to the symmetric functions in the variables $\mathbf{x}=\{x_1,x_2,\ldots\}$.
We shall abbreviate $f(\mathbf{x})$ to $f$ if no confusion will result.
In order to apply the above theorem to independence polynomials, we shall first introduce the notions of multicolorings and clan graphs. Let $G$ be a graph with vertex set $V(G)$. Given a map $\alpha: V(G) \to \mathbb{N}$,
a \textit{multicoloring of type $\alpha$} is a map $\kappa: V(G) \to 2^{\mathbb{N}_+}$ such that $|\kappa(v)| = \alpha(v)$, where $2^{\mathbb{N}_+}$ denotes the set of all finite subsets of positive integers. A multicoloring is called \textit{proper} if $\kappa(u) \cap \kappa(v) = \emptyset$ for all $uv \in E(G)$. Stanley \cite{Sta98} defined
\[
X^{\alpha}_G = \sum x_1^{a_1}x_2^{a_2} \cdots,
\]
where the sum ranges over all multicolorings $\kappa$ of type $\alpha$ and $a_i$ is the number of vertices $v$ such that $i \in \kappa(v)$. In the special case of $\alpha = (1,1,\ldots,1)$, $X^{\alpha}_G$ reduces to $X_G$, the chromatic symmetric function of $G$ defined by Stanley \cite{Sta95}.

Given $\alpha$ as above, the clan graph $G^{\alpha}$ is defined to be the graph obtained from $G$ by replacing each $v_i$ by the complete graph $K_{\alpha(v_i)}$ while preserving the adjacency, namely, each vertex in $K_{\alpha(u)}$ is adjacent to each vertex in $K_{\alpha(v)}$ if $uv \in E(G)$. Stanley \cite{Sta98} noted that
\begin{equation}\label{eq-xg-alpha}
X_{G^{\alpha}} = X_G^{\alpha} \prod_{v \in V(G)}\alpha(v)!,
\end{equation}
and we shall also call $X_G^{\alpha}$ the normalized chromatic symmetric function of $X_{G^{\alpha}}$. The following corollary plays an essential role in the remaining part of this paper.

\begin{cor}\label{cor-2s-lc}
Let $G$ be a graph and
\begin{equation*}\label{eq-yg}
Y_G = \sum_{\alpha} X_G^{\alpha},
\end{equation*}
where the sum ranges over all such maps $\alpha$ from $V(G)$ to $\mathbb{N}$. Then the following conditions are equivalent:
\begin{itemize}
  \item $[s_{\lambda}] Y_G\ge 0$ for all partitions with $\ell(\lambda) \le 2$.
  \item $[s_{(k,k)}]Y_G \ge 0$ for all $k \ge 1$.
  \item The independence polynomial $I_G(t)$ is log-concave.
  \item The independence polynomial $I_G(t)$ is strongly log-concave.
\end{itemize}
\end{cor}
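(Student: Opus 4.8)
The plan is to deduce Corollary~\ref{cor-2s-lc} directly from Theorem~\ref{thm-2s-lc} by taking $P(t) = I_G(t)$ and checking that $F_P(\mathbf{x})$ coincides with $Y_G$. First I would recall, following Stanley~\cite{Sta98}, the combinatorial interpretation of $X_G^{\alpha}$: a multicoloring of type $\alpha$ records, for each vertex $v$, a set of $\alpha(v)$ colors, with the monomial $x_1^{a_1}x_2^{a_2}\cdots$ tracking how many vertices receive color $i$. Summing $X_G^{\alpha}$ over all maps $\alpha: V(G) \to \mathbb{N}$ therefore amounts to summing over all ways of assigning to each vertex an arbitrary finite (possibly empty) subset of $\mathbb{N}_+$, with no properness restriction coming from $\alpha$ itself but with the requirement built into $X_G^{\alpha}$ that incident vertices receive disjoint color sets. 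The key step is to reorganize this sum color by color: for a fixed color $i$, the set of vertices receiving color $i$ must be an \emph{independent set} of $G$, and across distinct colors these choices are independent of one another. Hence for each color $i$ the generating function of its contribution is exactly $I_G(x_i) = \sum_j i_j x_i^{j}$, and multiplying over all $i$ gives $Y_G = \prod_{i \ge 1} I_G(x_i) = F_{I_G}(\mathbf{x})$.

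Once this identity is established, I would verify the hypothesis of Theorem~\ref{thm-2s-lc}, namely that the coefficients $a_0, \ldots, a_{\alpha(G)}$ of $I_G(t)$ are positive: indeed $a_0 = i_0 = 1$ by convention, and $i_j > 0$ for $1 \le j \le \alpha(G)$ since any $j$-subset of a maximum independent set is again independent, so the coefficient sequence has no internal zeros and in fact no zeros at all up to degree $\alpha(G)$. With $P(t) = I_G(t)$ satisfying $P(0) = 1$ and all coefficients positive, Theorem~\ref{thm-2s-lc} applies verbatim, and translating its four equivalent conditions through the identity $F_{I_G}(\mathbf{x}) = Y_G$ yields precisely the four bullet points of the corollary: condition (i) becomes $[s_\lambda] Y_G \ge 0$ for $\ell(\lambda) \le 2$, condition (ii) becomes $[s_{(k,k)}] Y_G \ge 0$, and conditions (iii)--(iv) become the (strong) log-concavity of $I_G(t)$.

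The only real content to check carefully is the identity $Y_G = \prod_{i \ge 1} I_G(x_i)$, i.e., that the unrestricted sum over $\alpha$ of the $X_G^{\alpha}$ really does factor color-wise. The subtlety is bookkeeping: one must confirm that summing $\sum_{\alpha} X_G^{\alpha}$ is the same as summing over all functions assigning each vertex an arbitrary finite subset of $\mathbb{N}_+$ (with the disjointness condition on edges), since the datum of $\alpha$ is recovered as $\alpha(v) = |\kappa(v)|$ — so the double sum over $\alpha$ and over type-$\alpha$ multicolorings collapses to a single sum over all proper multicolorings of unrestricted type. After that observation the factorization is a routine interchange of a product over colors with a sum over local color-assignments, and the bound on degrees (each $x_i$ appears to power at most $\alpha(G)$) guarantees $Y_G$ lies in $\Lambda_{\mathbb{Q}}(\mathbf{x})$ and that $F_P$ in the sense of Theorem~\ref{thm-2s-lc} is well defined. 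I expect no genuine obstacle here; it is essentially an unwinding of definitions already recorded by Stanley, and the corollary then follows immediately.
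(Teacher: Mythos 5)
Your proposal is correct and follows essentially the same route as the paper: identify $Y_G = \prod_{i} I_G(x_i) = F_{I_G}(\mathbf{x})$ via Stanley's combinatorial observation that color classes are independent sets, then apply Theorem~\ref{thm-2s-lc} with $P(t) = I_G(t)$. The paper simply cites Stanley for the identity and leaves the positivity of the coefficients $i_0,\ldots,i_{\alpha(G)}$ implicit, whereas you usefully spell out both the color-by-color factorization and the positivity check (every $j$-subset of a maximum independent set is independent, so $i_j>0$ for $j\le\alpha(G)$).
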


\begin{proof}
By noting that in each proper (multi)coloring of $G$ the vertices with the same color must be an independent set, Stanley \cite[Corollary 2.12]{Sta98} directly presented the following identity
\[
\sum_{\alpha:V \to \mathbb{N}} X_G^{\alpha} = \prod_i I_G(x_i).
\]
Then the corollary follows from Theorem \ref{thm-2s-lc}. %
\end{proof}

Motivated by Theorem \ref{thm-2s-lc} and Corollary \ref{cor-2s-lc}, we introduce the notion of 2-Schur-positivity.
We shall call a symmetric function $f$ \textit{2-Schur-positive} (or briefly \textit{2-$s$-positive}) if $[s_{\lambda}]f \ge 0$ for all $\lambda$ with $\ell(\lambda) \le 2$.
For convenience, we use $f \ge_{2s} 0$ to denote that $f$ is a {2-$s$-positive} symmetric function.
In particular, we write $f =_{2s} 0$ and call $f$ \textit{$2$-$s$-zero} if $[s_{\lambda}]f = 0$ for all $\lambda$ with $\ell(\lambda) \le 2$. In addition, if $f-g =_{2s} 0$ we also write $f =_{2s} g$.

Now by Corollary \ref{cor-2s-lc} the idea of our new method is rather simple: we just need to show that $Y_G$ is 2-$s$-positive by analyzing the 2-$s$-positivity of each $X_G^{\alpha}$. Further, Stanley noted that $X_{G + H} = X_GX_H$, where $G + H$ is the disjoint union of $G$ and $H$ (see \cite[Proposition 2.3]{Sta95}, and this identity also holds in general for $X_G^{\alpha}$). Hence the following lemma reduces this problem to consider the 2-$s$-positivity of each connected component of $X_G^{\alpha}$.

\begin{lem}\label{lem-2s-prod}
If $f$ and $g$ are 2-$s$-positive, then so is $fg$.
\end{lem}
\begin{proof}
By the Littlewood-Richardson rule, we see that
\[
s_{\lambda}s_{\mu} = \sum_{\nu} c_{\lambda\mu}^{\nu} s_{\nu},
\]
where $c_{\lambda\mu}^{\nu} \ge 0$ and $c_{\lambda\mu}^{\nu} = 0$ if $\ell(\nu) < \ell(\lambda)$ or $\ell(\nu) < \ell(\mu)$. The desired result immediately follows.
 \end{proof}

Next, for calculating the coefficients of $s_{\lambda}$'s in $X_G$ with $\ell(\lambda) \le 2$, we have a simple formula that involves stable partitions of $V(G)$. Let $B = \{B_1,B_2,\ldots,B_k\}$ be a set partition of $V(G)$. Then $B$ is said to be a \textit{stable partition} of $G$ if each block $B_i$ is an independent set, and the \textit{type} of $B$ is defined to be the partition formed by $|B_1|,|B_2|,\ldots,|B_k|$ in weakly decreasing order. A stable partition is called \textit{semi-ordered} if the blocks of the same cardinality are ordered. Then the formula is stated as follows, which could be deduced from a result of Wang and Wang \cite[Theorem 3.1]{WW20}.
%
%
%

\begin{prop}\label{prop-2s-coe}
Let $G$ be a graph with $|V(G)| = n$. Let $(k,l)$ be a partition of $n$ with $k \ge l \ge 1$ and let $\tilde{a}_{(k,l)}$ denote the number of semi-ordered stable partition of $G$ with type $(k,l)$. Then
\[
[s_{(k,l)}] X_G = \tilde{a}_{(k,l)} - \tilde{a}_{(k+1,l-1)}, \,\, [s_{(n)}] X_G = \tilde{a}_{(n)} = \begin{cases}
                          1, & \mbox{if $G$ consists of $n$ isolated vertices;} \\
                          0, & \mbox{otherwise}.
                        \end{cases}
\]
\end{prop}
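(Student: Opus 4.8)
The plan is to express $X_G$ in the monomial symmetric function basis and then convert to the Schur basis, keeping track only of coefficients indexed by partitions of length at most~$2$. Recall Stanley's expansion $X_G = \sum_{B} |B|!\, m_{\lambda(B)}$ where $B$ runs over stable partitions of $G$ and $\lambda(B)$ is the type of $B$; equivalently, collecting terms, $X_G = \sum_{\mu \vdash n} r_{\mu}\, \tilde m_{\mu}$ for suitable multiplicities, where it is cleaner to work with the augmented monomial functions. Since we only care about $[s_{(k,l)}]X_G$ and $[s_{(n)}]X_G$, and since the transition matrix between $\{m_\mu\}$ and $\{s_\lambda\}$ is triangular, the only monomial functions $m_\mu$ that can contribute to $s_{(k,l)}$ are those with $\mu \geq (k,l)$ in dominance order. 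For a partition of $n$ into at most two parts this forces $\mu \in \{(k,l), (k+1,l-1), \dots, (n)\}$, i.e. $\mu$ itself has length at most~$2$. So the computation localizes entirely to the ``length $\le 2$'' part of the $m$-expansion of $X_G$, which is governed exactly by the semi-ordered stable partitions of types $(n), (n-1,1), \dots, (\lceil n/2\rceil, \lfloor n/2\rfloor)$.

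Next I would invoke the explicit inverse Kostka-type relation restricted to two-row shapes. The key identity is that for a partition $\mu = (\mu_1,\mu_2)$ of $n$ with at most two parts, $s_{(k,l)} = \sum m_\mu - \sum m_{\mu'}$ where the precise combinatorial statement (a telescoping coming from the Jacobi--Trudi / Pieri expansion for two-row Schur functions, equivalently $s_{(k,l)} = h_k h_l - h_{k+1}h_{l-1}$ rewritten in the $m$-basis) yields, upon dualizing to coefficients, exactly
\[
[s_{(k,l)}]X_G = c_{(k,l)} - c_{(k+1,l-1)},
\]
where $c_{(p,q)}$ is the coefficient of the augmented monomial $\tilde m_{(p,q)}$ in $X_G$. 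The point of passing to augmented/semi-ordered objects is that the factor $|B|!$ in Stanley's formula together with the internal ordering of equal-size blocks makes $c_{(p,q)}$ count precisely the semi-ordered stable partitions of type $(p,q)$, i.e. $c_{(p,q)} = \tilde a_{(p,q)}$. This is where I would cite Wang and Wang \cite[Theorem 3.1]{WW20}, which already packages this bookkeeping: their result expresses the Schur coefficients of $X_G$ in terms of ordered stable set partitions, and specializing their formula to two-row shapes gives the displayed equation directly. The top case $[s_{(n)}]X_G = \tilde a_{(n)}$ is immediate since $\tilde a_{(n+1,-1)}$ is vacuously $0$, and $\tilde a_{(n)}$ counts stable partitions with a single block of size $n$, which exist (and then equal~$1$) precisely when $V(G)$ is itself independent, i.e. $G$ has no edges.

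The main obstacle, such as it is, is purely notational rather than mathematical: one must be careful about the distinction between plain monomial symmetric functions $m_\mu$ (whose coefficient in $X_G$ is $|B|!$ summed over stable partitions $B$ of type $\mu$) and the semi-ordered count $\tilde a_\mu$, and verify that the combinatorial factor matches the factor $1/\prod_i (\text{multiplicity of }i\text{ in }\mu)!$ hidden in the augmented monomial normalization, so that the net count is exactly ``ordered within equal-size blocks.'' Once that dictionary is fixed, the proof is a one-line specialization of \cite[Theorem 3.1]{WW20} to partitions of length $\le 2$ combined with the observation that dominance order prevents longer-shape monomials from interfering. I would therefore present the proof as: (1) recall the $m$-expansion of $X_G$ and the triangularity of the $m$-to-$s$ transition; (2) restrict to $\ell(\lambda)\le 2$ and note only length-$\le 2$ monomials survive; (3) apply \cite[Theorem 3.1]{WW20}; (4) read off the top-degree case by hand.
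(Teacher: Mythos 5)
Your proposal is correct and fills in exactly the deduction that the paper leaves implicit by citing Wang and Wang. The paper gives no proof of its own; you reconstruct the argument cleanly: (a) Stanley's $m$-expansion $X_G = \sum_\pi \tilde m_{\lambda(\pi)}$ over stable partitions, so that $[m_\mu]X_G = a_\mu \prod_i m_i(\mu)! = \tilde a_\mu$ is precisely the semi-ordered count; (b) the duality $\langle m_\mu, h_\nu\rangle = \delta_{\mu\nu}$ together with Jacobi--Trudi $s_{(k,l)} = h_k h_l - h_{k+1} h_{l-1}$, which gives $\langle m_\mu, s_{(k,l)}\rangle = \delta_{\mu,(k,l)} - \delta_{\mu,(k+1,l-1)}$; and (c) the degenerate case $s_{(n)} = h_n$. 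One small infelicity: the displayed line ``$s_{(k,l)} = \sum m_\mu - \sum m_{\mu'}$'' is not a well-formed identity (and is not literally what you use); the clean statement is that $s_{(k,l)}$ expands in the $h$-basis as $h_{(k,l)} - h_{(k+1,l-1)}$, whence the pairing against $m_\mu$ picks out exactly the two coefficients $\tilde a_{(k,l)}$ and $-\tilde a_{(k+1,l-1)}$. Your dominance-order triangularity remark is a correct (if ultimately unnecessary) sanity check, since the Jacobi--Trudi identity already isolates the two surviving terms without it.
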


One may observe that the coefficient of $s_{(n)}$ in each $X_G^{\alpha}$ is always nonnegative, and hence from now on we shall only focus on the coefficients of partitions with length 2. Moreover, if a graph has a stable partition of length 2, then it must be a bipartite graph. More precisely, we have the following corollary, which also implies that most terms in $Y_G$ are 2-$s$-zero and have no influence on the log-concavity of $I_G(t)$.

\begin{cor}\label{cor-2s-con-bipar}
For a connected bipartite graph $G$, $X_G$ is 2-$s$-positive if and only if its unique bipartition is balanced, i.e., it has type $(k,l)$ with $l \le k \le l+1$. Equivalently, $X_G$ is not 2-$s$-positive if and only if its bipartition is of type $(k,l)$ with $k \ge l+2$. In particular, $X_G =_{2s} 0$ for all non-bipartite graph $G$.
\end{cor}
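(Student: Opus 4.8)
The plan is to read everything off Proposition~\ref{prop-2s-coe}, splitting into the non-bipartite case and the connected-bipartite case. The combinatorial fact underlying both is that a partition of $V(G)$ into exactly two nonempty blocks, each of which is independent, is precisely a proper $2$-colouring of $G$ using both colours; hence $G$ admits a stable partition of length $2$ if and only if $G$ is bipartite with at least one edge, and when $G$ is \emph{connected} and bipartite this stable partition is unique --- it is the bipartition $\{A,B\}$ itself --- because a connected graph has a unique proper $2$-colouring up to swapping the two colours.

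For the non-bipartite case (with $G$ not assumed connected, which Proposition~\ref{prop-2s-coe} allows): such a $G$ has an edge, so it is not a union of isolated vertices and $[s_{(n)}]X_G=0$; and it has no stable partition of length $2$, so $\tilde a_{(k,l)}=0$ for every partition $(k,l)$ of $n$ with $l\ge1$. Proposition~\ref{prop-2s-coe} then gives $[s_\lambda]X_G=0$ for all $\lambda$ with $\ell(\lambda)\le2$, i.e.\ $X_G=_{2s}0$, which in particular is 2-$s$-positive; this disposes of the last sentence of the statement.

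For the connected-bipartite case, let $(A,B)$ be the bipartition with $k:=|A|\ge|B|=:l$, and assume $n\ge2$ so that $l\ge1$ (the one-vertex graph being trivial). By the uniqueness noted above, the unique stable partition of length $2$ is $\{A,B\}$, of type $(k,l)$, so $\tilde a_{(k,l)}=1$ when $k>l$, $\tilde a_{(k,k)}=2$ (the two orderings of the equal-size blocks), and $\tilde a_{(k',l')}=0$ for all other partitions $(k',l')$ of $n$ with $l'\ge1$, while $[s_{(n)}]X_G=0$. Substituting into Proposition~\ref{prop-2s-coe}, among the coefficients $[s_\lambda]X_G$ with $\ell(\lambda)\le2$ the only nonzero ones can occur at $\lambda=(k,l)$, where the value is $\tilde a_{(k,l)}>0$, and at $\lambda=(k-1,l+1)$, where it equals $0-\tilde a_{(k,l)}<0$; but $(k-1,l+1)$ is an actual partition of length $2$ exactly when $k-1\ge l+1$, i.e.\ $k\ge l+2$. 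Therefore, if $l\le k\le l+1$ every such coefficient is nonnegative and $X_G$ is 2-$s$-positive, whereas if $k\ge l+2$ then $[s_{(k-1,l+1)}]X_G<0$ and $X_G$ fails to be 2-$s$-positive, which is exactly the asserted equivalence.

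I do not expect a real obstacle here: once Proposition~\ref{prop-2s-coe} is available the argument is pure book-keeping. The only points requiring a little attention are the degenerate one-vertex graph, the harmless value $\tilde a_{(k,k)}=2$ in the balanced even case (which changes no sign), and the observation that a 2-$s$-zero symmetric function is vacuously 2-$s$-positive --- so ``$X_G$ not 2-$s$-positive'' forces $G$ to be bipartite \emph{and} to have an unbalanced bipartition, which is why the second sentence of the statement can be phrased the way it is.
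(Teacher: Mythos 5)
Your argument is correct and is essentially the paper's own proof: both read the coefficients off Proposition~\ref{prop-2s-coe}, using that a connected bipartite graph has a unique stable partition into two blocks (the bipartition), that a valid length-$2$ partition $(k-1,l+1)$ exists precisely when $k\ge l+2$, and that a non-bipartite graph has no length-$2$ stable partition at all. The only cosmetic difference is that you directly list all possibly nonzero coefficients (noting $\tilde a_{(k,k)}=2$ in the balanced even case), whereas the paper argues the converse direction by starting from a hypothetical negative coefficient $[s_{(m,n)}]X_G<0$ and deducing $\tilde a_{(m+1,n-1)}>0$; this is the same bookkeeping organised slightly differently.
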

\begin{proof}
Note that any connected bipartite graph has a unique bipartition. If $G$ consists of only one vertex, then the result follows trivially. Otherwise, if the bipartition of $G$ has type $(k,l)$ with $k \ge l+2$, then by Proposition \ref{prop-2s-coe}, $[s_{(k-1,l+1)}]X_G=\tilde{a}_{(k-1,l+1)} - \tilde{a}_{(k,l)} = -1$, where $(k-1,l+1)$ is a valid partition since $k-1 \ge l+1$.

Conversely, if $[s_{(m,n)}]X_G < 0$ for some $m \ge n \ge 1$, then $\tilde{a}_{(m+1,n-1)} > 0$, which implies that there exists a stable partition of type $(m+1,n-1)$, where $m+1 \ge (n-1) + 2$ since $m \ge n$.

Finally, if a graph is not bipartite, then it contains an odd cycle. One can observe that the vertices of any odd cycle cannot be divided into two independent subsets, which means that $G$ cannot have a stable partition of length 2 and hence $X_G$ is 2-$s$-zero.
\end{proof}

As stated before, Corollary \ref{cor-2s-lc} and Lemma \ref{lem-2s-prod} allow us to consider the 2-$s$-positivity of each connected component of each $X_G^{\alpha}$, and now Proposition \ref{prop-2s-coe} and Corollary \ref{cor-2s-con-bipar} provide the formula for calculating the coefficients of $s_{\lambda}$'s with $\ell(\lambda) \le 2$. Hence in order to prove that $Y_G$ is 2-$s$-positive we just need to show that all the (possible) negative terms will be eliminated in the summation $\sum_{\alpha}X_G^{\alpha}$.

To get some hints of the efficacy of this approach, let us first note a new proof of the log-concavity of independence polynomials of claw-free graphs, which was first proved by Hamidoune \cite{Ham90}.
If a bipartite graph is claw-free, i.e., it has no induced subgraph isomorphic to $K_{1,3}$, then it consists of paths and even cycles, which are both $2$-$s$-positive by direct calculation. (In fact, all paths and cycles are $e$-positive by Stanley \cite[Propositions 5.3 and 5.4]{Sta95}.) Hence by Corollary \ref{cor-2s-con-bipar} any claw-free graph $G$ has $2$-$s$-positive chromatic symmetric function, which yields the $2$-$s$-positivity of $Y_G$ since all clan graphs $G^{\alpha}$ are also claw-free. Then by Corollary \ref{cor-2s-lc}, we obtain the log-concavity of independence polynomials of claw-free graphs. Hamidoune's log-concavity result was strengthened to real-rootedness by Chudnovsky and Seymour \cite{CS07}, which is closely related to Stanley's conjecture on $s$-positivity of all claw-free graphs in view of Theorem \ref{thm-spos-rr}.

\section{Log-concavity of independence polynomials}\label{sec-lc}

This section is devoted to presenting our main results. In the first subsection, we shall prove that independence polynomials of all spiders are log-concave. In the second subsection, we shall give two symmetric function analogues of \eqref{eq-rec}, and apply one of them to deduce the log-concavity of independence polynomials of pineapple graphs.

\subsection{Log-concavity of independence polynomials of spiders}

In this subsection we shall  prove the log-concavity of independence polynomials of spiders. For convenience, we use $S(\lambda)$ to represent the spider whose leg lengths form an integer partition   $\lambda=(\lambda_1,\lambda_2,\ldots,\lambda_{\ell(\lambda)})$. In addition, we label the torso of the spider as $v_0$, and label the vertex adjacent to $v_0$ in the $i$-th leg as $v_i$ for each $1 \le i \le \ell(\lambda)$, see Figure \ref{fig-spider}.
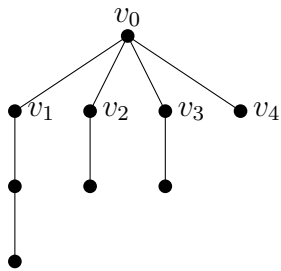
\begin{figure}[htbp]
\centering
\begin{tikzpicture}[scale = 1]

\fill (-6,0) circle (0.5ex);
\fill (-3,0) circle (0.5ex);
\fill (-4,0) circle (0.5ex);
\fill (-4,-1) circle (0.5ex);

\fill (-4.5,1) circle (0.5ex);
\fill (-5,0) circle (0.5ex);
\fill (-6,-1) circle (0.5ex);
\fill (-6,-2) circle (0.5ex);
\fill (-5,-1) circle (0.5ex);
\draw (-4.5,1) -- (-6,0) -- (-6,-1) -- (-6,-2);
\draw (-4.5,1) -- (-5,0) -- (-5,-1);
\draw (-4.5,1) -- (-4,0) -- (-4,-1);
\draw (-4.5,1) -- (-3,0);
\node at (-4.5,1.25) {$v_0$};
\node at (-5.65,0) {$v_1$};
\node at (-4.65,0) {$v_2$};
\node at (-3.65,0) {$v_3$};
\node at (-2.65,0) {$v_4$};
\end{tikzpicture}\caption{Spider $S(3,2,2,1)$}\label{fig-spider}
\end{figure}


\begin{thm}\label{thm-spi-lc}
Let $\lambda$ be a partition and $S(\lambda)$ be the corresponding spider. Then $Y_{S(\lambda)}$ is 2-$s$-positive, or equivalently, the independence polynomial $I_{S(\lambda)}(t)$ is strongly log-concave.
\end{thm}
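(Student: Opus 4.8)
The plan is to invoke Corollary~\ref{cor-2s-lc}: it suffices to prove that $Y_{S(\lambda)}=\sum_{\alpha}X_{S(\lambda)}^{\alpha}$ is $2$-$s$-positive. I would first split this sum according to the value $j=\alpha(v_0)$ at the torso. By Corollary~\ref{cor-2s-con-bipar} and Lemma~\ref{lem-2s-prod} (together with the companion fact that $f=_{2s}0$ forces $fg=_{2s}0$, which follows from the same Littlewood--Richardson argument as Lemma~\ref{lem-2s-prod}), only those $\alpha$ whose clan graph $S(\lambda)^{\alpha}$ is bipartite can contribute a non-$2$-$s$-zero term; this already forces $\alpha(v)\le2$ for every $v$, and forbids two adjacent vertices from both carrying a positive value unless both equal $1$. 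Consequently: if $j\ge3$ the clan of $v_0$ contains a triangle and $X_{S(\lambda)}^{\alpha}=_{2s}0$; if $j=0$ the torso is deleted and $\sum_{\alpha(v_0)=0}X_{S(\lambda)}^{\alpha}=\prod_{i}Y_{P_{\lambda_i}}$, which is $2$-$s$-positive since paths are claw-free (as recalled just before Section~\ref{sec-lc}) and by Lemma~\ref{lem-2s-prod}; if $j=2$ then every neighbour $v_i$ of $v_0$ must carry $0$, the clan of $v_0$ is an isolated $K_2$, and $\sum_{\alpha(v_0)=2}X_{S(\lambda)}^{\alpha}=_{2s}s_{(1,1)}\prod_{i}Y_{P_{\lambda_i-1}}$, again $2$-$s$-positive. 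Everything therefore reduces to the remaining piece $Z:=\sum_{\alpha(v_0)=1}X_{S(\lambda)}^{\alpha}$.

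Next I would unwind the structure of $Z$. When $\alpha(v_0)=1$, on each leg $i$ the admissibility constraints force the values to begin with a (possibly empty) run of $1$'s of some length $\mu_i\ge0$ starting at $v_i$, then --- if the leg is not exhausted --- a forced $0$, and after that an arbitrary admissible colouring of the leftover path $P_{\lambda_i-\mu_i-1}$. Writing $S=\{i:\mu_i\ge1\}$, the connected component of $v_0$ in $S(\lambda)^{\alpha}$ is exactly the sub-spider $\widehat S$ with leg lengths $(\mu_i)_{i\in S}$ (to be read as a path when $|S|\le2$ and as $K_1$ when $S=\varnothing$), while summing over the leftover paths only introduces $2$-$s$-positive factors of the form $Y_{P_{\nu}}$. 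By Proposition~\ref{prop-2s-coe} and Corollary~\ref{cor-2s-con-bipar}, $X_{\widehat S}$ is $2$-$s$-positive unless $|S|\ge3$ and at least three of the $\mu_i$ are odd, in which case the unique stable bipartition yields $X_{\widehat S}=_{2s}s_{(k,l)}-s_{(k-1,l+1)}$ with $k+l=1+\sum_{i\in S}\mu_i$ and $k-l=\#\{i\in S:\mu_i\ \text{odd}\}-1\ge2$. Hence the only negative contributions to $Y_{S(\lambda)}$ come from these ``bad'' colourings, each contributing $-\,s_{(k-1,l+1)}\,g_{\alpha}$ for a $2$-$s$-positive product $g_{\alpha}$ of path factors.

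The main step is to show that these deficits cancel. I would construct an injection $\phi$, preserving the degree $\sum_v\alpha(v)$, from bad colourings to ``good'' ones with $\phi(\alpha)(v_0)=0$: given a bad $\alpha$ one picks a canonical leg $i_0$ with odd run and transfers the colour at $v_0$ onto leg $i_0$, so that deleting the torso turns the sub-spider into genuine path components while lengthening (or shifting) the run on leg $i_0$ by one both makes that run even and deletes one odd leg from the torso-component; when the sub-spider had exactly three odd legs the new torso-component is balanced, and otherwise one iterates on two legs at a time. One then checks, for each bad $\alpha$, that $X_{S(\lambda)}^{\alpha}+X_{S(\lambda)}^{\phi(\alpha)}\ge_{2s}0$, i.e.\ that the $2$-$s$-positive Schur content created by $\phi(\alpha)$ dominates $s_{(k-1,l+1)}g_{\alpha}$; a convenient bookkeeping is to fix $\alpha$ away from leg $i_0$, sum over the run length there, and reduce matters to a one-variable identity for $Y_{P_{n}}$. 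Verifying all of this --- that $\phi$ creates no accidental triangle (which is exactly why a run of $1$'s must end in a genuine $0$), that $\phi$ is injective, and that the surviving positive content beats the deficit in every homogeneous degree --- is where the real work lies, and is the main obstacle. As a consistency check: Corollary~\ref{cor-2s-lc} and the computation in the proof of Theorem~\ref{thm-2s-lc} give $[s_{(m,n)}]Y_{S(\lambda)}=i_{m}i_{n}-i_{m+1}i_{n-1}$, so the whole assertion is equivalent to the log-concavity of $I_{S(\lambda)}(t)=\prod_{i}I_{P_{\lambda_i}}(t)+t\prod_{i}I_{P_{\lambda_i-1}}(t)$, and the argument above is a symmetric-function incarnation of this polynomial inequality. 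Assembling the four pieces gives $Y_{S(\lambda)}\ge_{2s}0$, whence $I_{S(\lambda)}(t)$ is strongly log-concave by Corollary~\ref{cor-2s-lc}.
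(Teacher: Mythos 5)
Your reduction is correct and matches the paper's Step~1 exactly: splitting on $j=\alpha(v_0)$, noting that $j\ge 3$ (and more generally any adjacency producing a $K_3$ in the clan) forces $X_{S(\lambda)}^{\alpha}=_{2s}0$, that $j=0$ and $j=2$ give products of path clans which are $2$-$s$-positive, and that the only possibly non-$2$-$s$-positive terms are those with $\alpha(v_0)=1$ and the torso component $C_0$ unbalanced, which happens precisely when $C_0$ has $a\ge 3$ legs of odd length. Your identification of the deficit $-s_{(k-1,l+1)}g_{\alpha}$ via Proposition~\ref{prop-2s-coe} is also right.

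However, the heart of the proof is the construction and verification of the injection $\phi$ and the inequality $X_{S(\lambda)}^{\alpha}+X_{S(\lambda)}^{\phi(\alpha)}\ge_{2s}0$, and this you explicitly leave as ``where the real work lies.'' The sketch you give is not a valid $\phi$: ``transfer the colour at $v_0$ onto leg $i_0$'' by lengthening the run of $1$'s by one (i)~is undefined when the run already fills the entire leg ($\mu_{i_0}=\lambda_{i_0}$, so there is no vertex to receive the colour), (ii)~does not preserve the run structure, since the vertex two positions after the old run may already carry a $1$ so the new run length is not $\mu_{i_0}+1$, and (iii)~is not visibly injective, since the image $\beta$ carries no record of \emph{which} leg was lengthened nor that the preimage had $\alpha(v_0)=1$, and distinct bad $\alpha$'s can collide. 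The ``iterate on two legs at a time'' clause is too vague to repair this. The paper's $\phi$ is designed precisely to avoid all of these problems: it maps into colourings built from blocks $2,0,2,0,\ldots$ on one or two designated legs, so the image carries an unambiguous marker (the unique index, or pair of indices, at which $\alpha_i=2$ occurs, plus the differing lengths of the two $2020\cdots$ subsequences) from which the preimage can be reconstructed, it never runs out of room, and the coefficient $[s_{(k-1,l+1)}]X_{S(\lambda)}^{\phi(\alpha)}=(a-2)2^{b}\ge 1$ can be computed exactly via Proposition~\ref{prop-2s-coe}. Without a concrete $\phi$ with a verified injectivity and a verified domination estimate, your argument does not establish the theorem; Steps~2 and~3 of the paper are exactly what is missing.
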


Let us first introduce the outline of this proof. At the first step, we need to analyze all $X_{S(\lambda)}^{\alpha}$ and find out the maps $\alpha$ with $X_{S(\lambda)}^{\alpha}$ being (possibly) not 2-$s$-positive. (This is different from the case of claw-free graphs $G$ where all $X_{G}^{\alpha}$ are 2-$s$-positive.) Then we shall define an injection $\phi$ from the set of such map $\alpha$'s to the set of remaining maps. Finally we show that $X_{S(\lambda)}^{\alpha}+X_{S(\lambda)}^{\phi(\alpha)} \ge_{2s} 0$, which means all the negative terms can be eliminated and hence $Y_{S(\lambda)} \ge_{2s} 0$.


\begin{proof}[Proof of Theorem \ref{thm-spi-lc}.]
By the idea sketched above, our proof will be accomplished by the following three steps.

\textbf{Step 1: analyzing all $X_{S(\lambda)}^{\alpha}$}

We list all the cases for $S(\lambda)^{\alpha}$, where we abbreviate $\alpha(v_i)$ as $\alpha_i$. 
\begin{itemize}
  \item[(1)] $\alpha_0 \ge 3$: 
      $X_{S(\lambda)^{\alpha}} =_{2s} 0$ since there exists a triangle in $K_{\alpha_0}$ and hence the graph is not bipartite (see Corollary \ref{cor-2s-con-bipar}).
  \item[(2)] $\alpha_0 = 2$ and 
      $\alpha_i \ge 1$ for some $1 \le i \le \ell(\lambda)$: $X_{S(\lambda)^{\alpha}} =_{2s} 0$ for the same reason as (1), and here the triangle is formed by the vertices in $K_{\alpha_0}$ and $K_{\alpha_i}$.
  \item[(3)] $\alpha_0 =2$ 
      and $\alpha_i =0$ for all $1 \le i \le \ell(\lambda)$: $X_{S(\lambda)^{\alpha}} \ge_{2s} 0$. Indeed, if $S(\lambda)^{\alpha}$ is bipartite in this case, then all of its components are paths or isolated points. Hence $X_{S(\lambda)^{\alpha}}$ is 2-$s$-positive by Lemma \ref{lem-2s-prod} and Corollary \ref{cor-2s-con-bipar}.
  \item[(4)] $\alpha_0 = 1$ 
      and $\alpha_i =0$ for all $1 \le i \le \ell(\lambda)$: $X_{S(\lambda)^{\alpha}} \ge_{2s} 0$ for the same reason as (3).
  \item[(5)] $\alpha_0 =1$ 
      and $\alpha_i \ge 1$ for some $1 \le i \le \ell(\lambda)$: we have the following subcases:
      \begin{itemize}
        \item[(5.1)] $\alpha_i \ge 2$ for some $1 \le i \le \ell(\lambda)$: $X_{S(\lambda)^{\alpha}} =_{2s} 0$ for the same reason as (1).
        \item[(5.2)] $\alpha_i \le 1$ for all $1 \le i \le \ell(\lambda)$: this is the \emph{only} possible case for $X_{S(\lambda)^{\alpha}}$ to be non-2-$s$-positive.
      \end{itemize}
  \item[(6)] $\alpha_0 = 0$: 
      $X_{S(\lambda)^{\alpha}} \ge_{2s} 0$ for the same reason as (3).
\end{itemize}

Next, we investigate the $S(\lambda)^{\alpha}$'s of (5.2) in more details. We take the component containing $v_0$ and call it $C_0 = C_0(\lambda,\alpha)$, which is actually a smaller spider and hence bipartite. Moreover, the complement graph of $C_0$ in $S(\lambda)^{\alpha}$ is denoted by $\overline{C_0}$. Then we have the following observation:
\begin{obs}\label{obs-2s}
If $X_{S(\lambda)^{\alpha}}$ is non-2-$s$-positive, then $C_0$ is non-2-$s$-positive.
\end{obs}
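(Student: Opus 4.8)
The plan is to prove the \emph{contrapositive}: if $C_0$ is $2$-$s$-positive, then $X_{S(\lambda)^\alpha}$ is $2$-$s$-positive. The two ingredients are the multiplicativity of the chromatic symmetric function over disjoint unions, $X_{H_1\sqcup H_2}=X_{H_1}X_{H_2}$ (\cite[Proposition 2.3]{Sta95}), applied to the connected components of the graph $S(\lambda)^\alpha$, and Lemma \ref{lem-2s-prod}, by which a product of $2$-$s$-positive symmetric functions is again $2$-$s$-positive. So it will suffice to show that every connected component of $S(\lambda)^\alpha$ other than $C_0$ has a $2$-$s$-positive chromatic symmetric function.

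First I would pin down those components. In case (5.2) we have $\alpha_0=1$ and $\alpha_i\le 1$ for all $1\le i\le\ell(\lambda)$, so in $S(\lambda)^\alpha$ the torso $v_0$ survives as a single vertex, joined to exactly those $v_i$ with $\alpha_i=1$. Since distinct legs of $S(\lambda)$ meet only in $v_0$, deleting $C_0$ leaves a graph each of whose components lies entirely inside a single leg; hence every such component $D$ is the clan graph $P^\beta$ of a subpath $P$ of a leg, $\beta$ being the surviving multiplicities on $P$ (so $\beta\ge 1$ throughout $P$).

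Next I would classify these $D=P^\beta$. If $D$ is non-bipartite --- equivalently, if some clique $K_{\beta(u)}$ has order at least $3$, or two adjacent cliques $K_{\beta(u)},K_{\beta(w)}$ have orders summing to at least $3$ --- then $D=_{2s}0$ by Corollary \ref{cor-2s-con-bipar}, hence $D\ge_{2s}0$. Otherwise $D$ is triangle-free; then every $\beta(u)\le 2$, and a vertex with $\beta(u)=2$ can have no surviving neighbour in $P$, so $D$ is either an ordinary path (all multiplicities equal to $1$) or a single edge $K_2=P_2$. In either case $D$ is a path, which is $e$-positive by Stanley \cite[Proposition 5.3]{Sta95}, and in particular $2$-$s$-positive. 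Writing $S(\lambda)^\alpha=C_0\sqcup D_1\sqcup\cdots\sqcup D_r$ and using $X_{S(\lambda)^\alpha}=X_{C_0}\prod_{j=1}^{r}X_{D_j}$, Lemma \ref{lem-2s-prod} then yields $X_{S(\lambda)^\alpha}\ge_{2s}0$, which is the contrapositive we want.

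I do not expect a genuine obstacle here; the only point needing (elementary) care is the classification of triangle-free connected clan graphs of a path, and it is worth recording once that $X_{S(\lambda)^\alpha}$ --- equivalently $X_{S(\lambda)}^\alpha$, up to the positive scalar in \eqref{eq-xg-alpha} --- factors over the connected components of $S(\lambda)^\alpha$. The real work for Theorem \ref{thm-spi-lc} is postponed to the later steps: building the injection $\phi$ on the exceptional maps $\alpha$ and checking that $X_{S(\lambda)^\alpha}+X_{S(\lambda)^{\phi(\alpha)}}\ge_{2s}0$.
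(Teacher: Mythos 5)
Your proposal is correct and takes essentially the same route as the paper: the paper likewise proves the contrapositive by observing that every component of $\overline{C_0}$ is a clan graph of a path, hence either non-bipartite (so $2$-$s$-zero) or an honest path/$K_2$ (so $2$-$s$-positive), and then invokes multiplicativity together with Lemma~\ref{lem-2s-prod}. You simply spell out the classification of these components in more detail than the paper's one-sentence remark.
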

In fact, similar to Case (3), all the components of $\overline{C_0}$ are clan graphs of paths, which are either 2-$s$-positive or not bipartite. Hence if $C_0$ is 2-$s$-positive, then $X_{S(\lambda)^{\alpha}} = X_{C_0}X_{\overline{C_0}}$ must be 2-$s$-positive by Lemma \ref{lem-2s-prod}. This proves Observation \ref{obs-2s}.

Now we divide (5.2) into two subcases:
\begin{itemize}[leftmargin=1.5cm]
 \item[(5.2.1)] $C_0$ is 2-$s$-positive. In this subcase $X_{S(\lambda)^{\alpha}} \ge_{2s} 0$.
 \item[(5.2.2)] $C_0$ is non-2-$s$-positive. Then, by Corollary \ref{cor-2s-con-bipar}, its bipartition is of type $(k,l)$ with $k-l \ge 2$.
\end{itemize}
In the following by abuse of notation we will use \textbf{(5.2.2)} (resp. \textbf{(6)}) to represent the set of those maps $\alpha: V(S(\lambda)) \to \mathbb{N}$ satisfying the condition of (5.2.2) (resp. (6)).

\textbf{Step 2: defining $\phi:\textbf{(5.2.2)}\rightarrow \textbf{(6)}$ and proving that it is an injection}

Now we shall give an injection $\phi$ from \textbf{(5.2.2)} to \textbf{(6)}. Denote $C_0$ by $S(\beta)$ as a subgraph of $S(\lambda)$, where $\beta$ is a weak composition satisfying $0 \le \beta_i \le \lambda_i$ for all $1 \le i \le \ell(\lambda)$.
(Here we do not require that $\beta$ is a partition. Hence we keep the order of the original legs of $S(\lambda)$, as well as the order of vertices on each leg.)
Denote the set of legs of odd length and even length of $S(\beta)$ by $\{L(\beta_{i_1}),\ldots,L(\beta_{i_a})\}$ and $\{L(\beta_{j_1}),\ldots,L(\beta_{j_b})\}$ respectively. Then one can verify that the sizes of the two blocks of the bipartition of $C_0$ are
{\small \[
\frac{\beta_{i_1}+1}{2}+\cdots+\frac{\beta_{i_a}+1}{2}+\frac{\beta_{j_1}}{2}+ \cdots + \frac{\beta_{j_b}}{2} \quad \mbox{and} \quad 1+\frac{\beta_{i_1}-1}{2}+\cdots+\frac{\beta_{i_a}-1}{2}+\frac{\beta_{j_1}}{2}+ \cdots + \frac{\beta_{j_b}}{2}.
\]}

\noindent Therefore, if the difference of these two numbers is at least 2, then there must exist at least 3 legs of odd length in $C_0 = S(\beta)$, i.e., $a \ge 3$. Further, for notational convenience, we shall use the following symbols:
$$\beta_{odd}^+ = \frac{\beta_{i_1}+1}{2}+\cdots+\frac{\beta_{i_a}+1}{2}, \beta_{odd}^- =\frac{\beta_{i_1}-1}{2}+\cdots+\frac{\beta_{i_a}-1}{2}, \beta_{even} = \frac{\beta_{j_1}}{2}+ \cdots + \frac{\beta_{j_b}}{2}. $$

Take the first leg (using the order in $\beta = (\beta_1,\ldots,\beta_{\ell(\lambda)})$) in the set of legs with the smallest odd length. Let this leg be $L(\beta_{i_k}) = \{v_{q_1},\ldots,v_{q_{\beta_{i_k}}}\}$. Then we define $\phi$ as follows:
\begin{itemize}
  \item[(i)] If $\beta_{i_k} = 1$, then
\[
(\phi(\alpha))_i = \begin{cases}
                     0, & \mbox{if } i=0; \\
                     2, & \mbox{if } i=q_1=q_{\beta_{i_k}}; \\
                     \alpha_i, & \mbox{otherwise}.
                   \end{cases}
\]
In terms of graphs, we just delete the vertex $v_0$ and replace the vertex $v_{q_1}$ by a $K_2$ in $S(\lambda)^{\alpha}$ to obtain $S(\lambda)^{\phi(\alpha)}$, see Figure \ref{fig-phi(i)}.
\begin{figure}[htbp]
\centering
\begin{tikzpicture}[scale = 1.2]
\fill (0,1) node (v1) {} circle (0.5ex);
\fill (0,0) node (v7) {} circle (0.5ex);
\fill (-1,0) node (v9) {} circle (0.5ex);
\fill (-1,-1) node (v9) {} circle (0.5ex);
\fill (-1,-2) node (v9) {} circle (0.5ex);
\fill (1,0) node (v10) {} circle (0.5ex);
\fill (1,-1) node (v10) {} circle (0.5ex);
\fill (1,-2) node (v10) {} circle (0.5ex);
\draw (0,1) -- (-1,0) -- (-1,-1) -- (-1,-2);
\draw (0,1) -- (0,0);
\draw (0,1) -- (1,0) -- (1,-1) -- (1,-2);
\fill (0,-2) node (v10) {} circle (0.5ex);

\node at (2,0) {$\overset{\phi}{\rightarrow}$};

\fill (4.25,0) node (v7) {} circle (0.5ex);
\fill (3.75,0) node (v7) {} circle (0.5ex);
\draw (4.25,0) -- (3.75,0);
\fill (5,0) node (v9) {} circle (0.5ex);
\fill (3,-1) node (v9) {} circle (0.5ex);
\fill (3,-2) node (v9) {} circle (0.5ex);
\fill (3,0) node (v10) {} circle (0.5ex);
\fill (5,-1) node (v10) {} circle (0.5ex);
\fill (5,-2) node (v10) {} circle (0.5ex);
\draw (3,0) -- (3,-1) -- (3,-2);
\draw (5,0) -- (5,-1) -- (5,-2);
\fill (4,-2) node (v10) {} circle (0.5ex);
\node at (0,1.5) {\footnotesize $v_0$};
\node at (-1,0.5) {\footnotesize $v_{\beta_{i_{k-1}}}$};
\node at (1,0.5) {\footnotesize $v_{\beta_{i_{k+1}}}$};
\node at (3,0.5) {\footnotesize $v_{\beta_{i_{k-1}}}$};
\node at (0,-0.5) {\footnotesize $v_{\beta_{i_k}}$};
\node at (3.75,0.5) {\footnotesize $v_{\beta_{i_k}}$};
\node at (4.25,0.5) {\footnotesize $v'_{\beta_{i_k}}$};
\node at (5,0.5) {\footnotesize $v_{\beta_{i_{k+1}}}$};
\end{tikzpicture}
    \caption{An example for Case (i) of $\phi$}\label{fig-phi(i)}
\end{figure}
  \item[(ii)] If $\beta_{i_k} \ge 3$, then take the first leg in the other legs of odd length, say $L(\beta_{i_l}) = \{v_{r_1},\ldots,v_{r_{\beta_{i_l}}}\}$. Note that $\beta_{i_l} \ge \beta_{i_k}$ by our choice. Let
\end{itemize}
\[
(\phi(\alpha))_i = \begin{cases}
                     0, & \mbox{if } i=0; \\
                     2, & \mbox{if } i=q_1,q_3,\ldots,q_{\beta_{i_k}}; \\
                     0, & \mbox{if } i=q_2,q_4,\ldots,q_{\beta_{i_k}-1}; \\
                     2, & \mbox{if } i=r_1,r_3,\ldots,r_{\beta_{i_k}-2}; \\
                     0, & \mbox{if } i=r_2,r_4,\ldots,r_{\beta_{i_k}-1}; \\
                     \alpha_i, & \mbox{otherwise}.
                   \end{cases}
\]
In other words, we delete the vertex $v_0$, replace $L(\beta_{i_k})$ by $(\beta_{i_k}+1)/2$ $K_2$'s, and replace the first $\beta_{i_k}-1$ vertices in $L(\beta_{i_l})$ by $(\beta_{i_k}-1)/2$ $K_2$'s. See Figure \ref{fig-phi(ii)} for an illustration, where $i_l=i_{k+1}$. We should remark that the operation of $L(\beta(i_l))$ labels the length of $L(\beta_{i_k})$, which plays an essential role for proving the injectivity of $\phi$.
\begin{figure}[htbp]
\centering
\begin{tikzpicture}[scale=1.2]
\fill (0,1) node (v1) {} circle (0.5ex);
\fill (0,0) node (v7) {} circle (0.5ex);
\fill (-1,0) node (v9) {} circle (0.5ex);
\fill (-1,-1) node (v9) {} circle (0.5ex);
\fill (-1,-2) node (v9) {} circle (0.5ex);
\fill (1,0) node (v10) {} circle (0.5ex);
\fill (1,-1) node (v10) {} circle (0.5ex);
\fill (1,-2) node (v10) {} circle (0.5ex);
\fill (1,-3) node (v10) {} circle (0.5ex);
\fill (1,-4) node (v10) {} circle (0.5ex);
\draw (0,1) -- (-1,0) -- (-1,-1) -- (-1,-2);
\draw (0,1) -- (0,0) -- (0,-1) -- (0,-2);
\draw (0,1) -- (1,0) -- (1,-1) -- (1,-2) -- (1,-3) -- (1,-4);
\fill (0,-1) node (v10) {} circle (0.5ex);
\fill (0,-2) node (v10) {} circle (0.5ex);
\fill (-0.75,-4) node (v10) {} circle (0.5ex);
\fill (-1.25,-4) node (v10) {} circle (0.5ex);
\draw (-0.75,-4) -- (-1.25,-4);
\fill (0,-4) node (v7) {} circle (0.5ex);

\node at (2,0) {$\overset{\phi}{\rightarrow}$};

\fill (4.25,0) node (v7) {} circle (0.5ex);
\fill (3.75,0) node (v7) {} circle (0.5ex);
\draw (4.25,0) -- (3.75,0);
\fill (5,0) node (v9) {} circle (0.5ex);
\fill (2.75,-2) node (v9) {} circle (0.5ex);
\fill (3.25,-2) node (v9) {} circle (0.5ex);
\fill (2.75,0) node (v10) {} circle (0.5ex);
\fill (3.25,0) node (v10) {} circle (0.5ex);
\fill (5,-1) node (v10) {} circle (0.5ex);
\fill (5,-2) node (v10) {} circle (0.5ex);
\draw (2.75,0) -- (3.25,0);
\draw (2.75,-2) -- (3.25,-2);
\draw (5,0) -- (5,-1) -- (5,-2) -- (5,-3) -- (5,-4);
\fill (4,-2) node (v10) {} circle (0.5ex);
\fill (2.75,-4) node (v10) {} circle (0.5ex);
\fill (3.25,-4) node (v10) {} circle (0.5ex);
\draw (2.75,-4) -- (3.25,-4);
\fill (4,-4) node (v7) {} circle (0.5ex);
\fill (5,-3) node (v10) {} circle (0.5ex);
\fill (5,-4) node (v10) {} circle (0.5ex);
%
\node at (-1,0.5) {\footnotesize$v_{\beta_{i_{k}}}$};
\node at (0,0.5) {\footnotesize$v_{\beta_{i_{k+1}}}$};
\node at (1,0.5) {\footnotesize$v_{\beta_{i_{k+2}}}$};
\node at (2.65,0.5) {\footnotesize $v_{\beta_{i_{k}}}$};
\node at (3.15,0.5) {\footnotesize$v_{\beta_{i_{k}}}'$};
\node at (3.65,0.5) {\footnotesize$v_{\beta_{i_{k+1}}}$};
\node at (4.35,0.5) {\footnotesize$v_{\beta_{i_{k+1}}}'$};
\node at (5,0.5) {\footnotesize$v_{\beta_{i_{k+2}}}$};
\end{tikzpicture}
    \caption{An example for Case (ii) of $\phi$}\label{fig-phi(ii)}
\end{figure}

It can be easily verified that in both cases $\phi$ preserves the number of vertices of $X_{S(\lambda)^{\alpha}}$ and $\phi$ is exactly a map from \textbf{(5.2.2)} to \textbf{(6)}. It remains to show that $\phi$ is actually an injection.

Note that case (i) and case (ii) are distinguished by the number of indices $1 \le i \le \ell(\lambda)$ with $\alpha_i = \alpha(v_i) = 2$. In case (i), there is only one such index and hence in order to get the preimage we just replace the corresponding $K_2$ by one vertex $v_i$ and add $v_0$ back. In case (ii), there are two such indices, say $i_k,i_l$, and there are consecutive $K_2$'s starting from $\alpha_{i_k} = \alpha(v_{i_k}) = 2$ and $\alpha_{i_l} = \alpha(v_{i_l}) =2$ in the two corresponding legs respectively. These consecutive $K_2$'s will appear as subsequences of the form $202020\cdots$, where $2$ represents $K_2$ and $0$ represents the empty graph. (In each leg we read the graphs from top to bottom.) The numbers of such consecutive $K_2$'s are actually different in these two legs by the definition of $\phi$. If we denote the smaller number by $t$, then there will be a subsequence $2020\cdots 201$ of length $2t+1$ in this leg and a subsequence $2020\cdots 202$ of length $2t+1$ in the other leg, where the numbers of $K_2$'s are $t$ and $t+1$ respectively. In order to construct the preimage we just need to replace these two subsequences by $11\cdots 1$ of length $2t+1$.


\textbf{Step 3: showing that all negative terms can be eliminated}

As the final step, we give the proof of
\begin{equation*}
X^{\alpha}_{S(\lambda)}+X^{\phi(\alpha)}_{S(\lambda)} \ge_{2s} 0
\end{equation*}
for each $\alpha$ satisfying the condition of (5.2.2).
Note that the $\alpha_i$'s are fixed under $\phi$ for $v_i \in \overline{C_0}$, and these $\alpha_i$'s induce clan graphs of paths as mentioned in the explanation for Observation \ref{obs-2s}. To be precise, let $\alpha|_{C_0}$ be the restriction of $\alpha$ to $C_0$, i.e., the vector obtained from $\alpha$ by replacing all $\alpha_i$ with 0 for $v_i \not\in C_0$ and keeping the $\alpha_i$ for $v_i \in C_0$. The notation $\alpha|_{\overline{C_0}}$ is defined similarly. Then $X^{\alpha|_{\overline{C_0}}}_{S(\lambda)} = X^{\phi(\alpha)|_{\overline{C_0}}}_{S(\lambda)}$ and they are both 2-$s$-positive. Further, by \eqref{eq-xg-alpha} we have
\begin{align*}
X^{\alpha}_{S(\lambda)} = X^{\alpha|_{C_0}}_{S(\lambda)} \cdot X^{\alpha|_{\overline{C_0}}}_{S(\lambda)} \quad \mbox{and} \quad X^{\phi(\alpha)}_{S(\lambda)} = X^{\phi(\alpha)|_{C_0}}_{S(\lambda)} \cdot X^{\phi(\alpha)|_{\overline{C_0}}}_{S(\lambda)}.
\end{align*}
Hence in order to prove $X^{\alpha}_{S(\lambda)}+X^{\phi(\alpha)}_{S(\lambda)} \ge_{2s} 0$, it suffices to show that  \begin{equation}\label{eq-phi-2spos}
X^{\alpha|_{C_0}}_{S(\lambda)}+X^{\phi(\alpha)|_{C_0}}_{S(\lambda)} \ge_{2s} 0.
\end{equation}

Note that $S(\lambda)^{\alpha|_{C_0}}$ is exactly the spider $C_0$.
In $C_0$, there is only one stable partition of length 2, and its type is $\left(\beta_{odd}^+ + \beta_{even}, \beta_{odd}^- + \beta_{even} + 1\right)$.
\noindent Hence
\[
X_{S(\lambda)}^{\alpha|_{C_0}} = X_{S(\lambda)^{\alpha|_{C_0}}} =_{2s}
s_{\left(\beta_{odd}^+ + \beta_{even}, \beta_{odd}^- + \beta_{even} + 1\right)} -
s_{\left(\beta_{odd}^+ + \beta_{even}-1, \beta_{odd}^- + \beta_{even} + 2\right)}.
\]
Here is a key observation: since $X^{\phi(\alpha)|_{C_0}}_{S(\lambda)}$ is already 2-$s$-positive by the definition of $\phi$, in order to prove \eqref{eq-phi-2spos} we only need to show
\[
\left[s_{\left(\beta_{odd}^+ + \beta_{even}-1, \beta_{odd}^- + \beta_{even} + 2\right)}\right]
X_{S(\lambda)}^{\phi(\alpha)|_{C_0}} \ge 1.
\]

Now we have two cases for calculating this coefficient in $X_{S(\lambda)^{\phi(\alpha)|_{C_0}}}$, according to the value of $\beta_{i_k}$ in the definition of the map $\phi$:

(i) $\beta_{i_k} = 1$. In this case there are $2^{b+1}$ ways to obtain a semi-ordered stable partition of type
$\left(\beta_{odd}^+ + \beta_{even}, \beta_{odd}^- + \beta_{even} + 1\right)$.
\noindent Indeed, to obtain the first block of this stable partition, we have to choose exactly one vertex $v_{\beta_{i_k}}$ or $v'_{\beta_{i_k}}$ in the $K_2$ corresponding to $L(\beta_{i_k})$, choose $(\beta_{i_t}+1)/2$ vertices in each leg $L(\beta_{i_t})$ of odd length with $1 \le t \le a\, (t \neq k)$ in a unique way, and choose $\beta_{j_t}/2$ vertices in each leg $L(\beta_{i_t})$ of even length with $1 \le t \le b$ in two ways. Thus,  the number of stable partitions of type  $\left(\beta_{odd}^+ + \beta_{even}, \beta_{odd}^- + \beta_{even} + 1\right)$ is $2 \cdot 2^b = 2^{b+1}$. Similarly, there are $(a-1)2^{b+1}$ ways to obtain a semi-ordered stable partition with type $\left(\beta_{odd}^+ + \beta_{even}-1, \beta_{odd}^- + \beta_{even} + 2\right)$  by exchanging the biparition of one leg $L(\beta_{i_t})$ of odd length for some $1 \le t \le a\,(t \neq k)$ .
By Proposition \ref{prop-2s-coe} we get
\begin{align*}
&\left[s_{\left(\beta_{odd}^+ + \beta_{even}-1, \beta_{odd}^- + \beta_{even} + 2\right)}\right]X^{\phi(\alpha)|_{C_0}}_{S(\lambda)} \\
 =&
\left[s_{\left(\beta_{odd}^+ + \beta_{even}-1, \beta_{odd}^- + \beta_{even} + 2\right)}\right]\frac{1}{2}X_{S(\lambda)^{\phi(\alpha)|_{C_0}}} \\
=& (a-2)2^{b} \ge 1,
\end{align*}
%
which follows from $a \ge 3$ and $b \ge 0$.

(ii) $\beta_{i_k} \ge 3$. The calculation in this case is exactly analogous to case (i) and we shall only list the data. We have
  $2^{(\beta_{i_k}+1)/2+(\beta_{i_k}-1)/2+b} = 2^{\beta_{i_k}+b}$  ways to obtain a semi-ordered stable partition of type $\left(\beta_{odd}^+ + \beta_{even}, \beta_{odd}^- + \beta_{even} + 1\right)$.
Moreover, we have $(a-1)2^{\beta_{i_k}+b}$ 
semi-ordered stable partitions of type $\left(\beta_{odd}^+ + \beta_{even}-1, \beta_{odd}^- + \beta_{even} + 2\right)$.
\noindent
Hence
\begin{align*}
&\left[s_{\left(\beta_{odd}^+ + \beta_{even}-1, \beta_{odd}^- + \beta_{even} + 2\right)}\right]X^{\phi(\alpha)|_{C_0}}_{S(\lambda)} \\
 =&
\left[s_{\left(\beta_{odd}^+ + \beta_{even}-1, \beta_{odd}^- + \beta_{even} + 2\right)}\right]\frac{1}{2^{\beta_{i_k}}}X_{S(\lambda)^{\phi(\alpha)|_{C_0}}} \\
=& (a-2)2^{b} \ge 1,
\end{align*}
which also follows from $a \ge 3$ and $b \ge 0$.

Combining the above steps, we complete the proof.
\end{proof}

\subsection{Symmetric function analogues of the recurrence formula}

In this subsection we shall present two symmetric function analogues of the recurrence formula \eqref{eq-rec}. Further, we apply these results to prove the log-concavity of independence polynomials of pineapple graphs.

\begin{prop}\label{prop-rec}
Let $v$ be a vertex in $G$
and $G-v$, $N[v] = \{v\} \cup N(v)$ be defined as in \eqref{eq-rec}. Then we have
\[
Y_G = Y_{G-v} + (Y_v-1)Y_{G-N[v]} + \sum_{\alpha}X_G^{\alpha},
\]
where $\alpha$ ranges over all maps from $V(G)$ to $\mathbb{N}$ such that $\alpha(v) \ge 1$ and $\alpha(u) \ge 1$ for some $u \in N(v)$.
\end{prop}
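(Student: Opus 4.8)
The plan is to mimic the classical derivation of \eqref{eq-rec} at the level of the symmetric functions $X_G^\alpha$, bookkeeping the maps $\alpha\colon V(G)\to\mathbb N$ according to the ``color set'' assigned to $v$ in a multicoloring. Recall that $Y_G=\sum_\alpha X_G^\alpha=\prod_i I_G(x_i)$, so I want to split the sum $\sum_\alpha X_G^\alpha$ into three pieces depending on how $\alpha$ and the proper multicolorings $\kappa$ interact with $v$. First I would partition the index set of all maps $\alpha$ into: (a) those with $\alpha(v)=0$; (b) those with $\alpha(v)\ge 1$ but $\alpha(u)=0$ for every $u\in N(v)$; and (c) those with $\alpha(v)\ge 1$ and $\alpha(u)\ge 1$ for some $u\in N(v)$. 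Case (c) is exactly the leftover sum $\sum_\alpha X_G^\alpha$ appearing in the statement, so nothing needs to be said about it beyond noting it is collected verbatim.

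The main work is to identify case (a) with $Y_{G-v}$ and case (b) with $(Y_v-1)Y_{G-N[v]}$. For case (a): if $\alpha(v)=0$ then in any multicoloring $\kappa$ of type $\alpha$ we have $\kappa(v)=\emptyset$, so the edge-conditions at $v$ are vacuous and a proper multicoloring of $G$ of type $\alpha$ is precisely a proper multicoloring of $G-v$ of type $\alpha|_{V(G-v)}$, with the same monomial weight; summing over all such $\alpha$ gives $\sum_{\alpha(v)=0}X_G^\alpha=Y_{G-v}$. For case (b): here $\alpha(u)=0$ for all $u\in N(v)$, so again all neighbors of $v$ receive $\kappa(u)=\emptyset$ and the properness constraints linking $v$ to the rest of $G$ are vacuous; hence a proper multicoloring of $G$ of type $\alpha$ factors as a free choice of the nonempty finite set $\kappa(v)$ (of size $\alpha(v)\ge 1$) together with a proper multicoloring of $G-N[v]$ of type $\alpha|_{V(G-N[v])}$ — note $G-N[v]$ is $G$ with $v$ and all its neighbors removed, and the removed neighbors carry no color. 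The weight factorizes accordingly, and summing the $\kappa(v)$-factor over all nonempty finite subsets of $\mathbb N_+$ of any size $\ge 1$ produces $\sum_{k\ge 1}e_k(\mathbf x)=\prod_i(1+x_i)-1$, which is exactly $Y_v-1$ since $I_{K_1}(t)=1+t$ and $Y_v=Y_{K_1}=\prod_i(1+x_i)$. The remaining factor sums to $Y_{G-N[v]}$, giving $(Y_v-1)Y_{G-N[v]}$ for case (b).

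Adding the three cases yields the claimed identity. The step I expect to require the most care is the clean factorization in case (b): one must check that when $\alpha(u)=0$ for all $u\in N(v)$, the set of proper multicolorings of $G$ really is in weight-preserving bijection with (nonempty color set for $v$) $\times$ (proper multicolorings of $G-N[v]$), i.e. that removing the color-free neighbors does not lose or create any constraint, and that the generating-function identity $\sum_{\kappa(v)}x^{\kappa(v)}=\prod_i(1+x_i)-1$ is exactly $Y_v-1$ with the correct normalization (no stray factorials, since in $X_G^\alpha$ — as opposed to $X_{G^\alpha}$ — the colors within $\kappa(v)$ are recorded as an unordered set, matching $e_k$). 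Everything else is routine bookkeeping once the three cases are set up, and $I_G(t)=I_{G-v}(t)+tI_{G-N[v]}(t)$ is recovered by specializing all but finitely many $x_i$ to $0$, which is a useful sanity check but not needed for the proof.
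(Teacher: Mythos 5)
Your proposal is correct and follows exactly the paper's own approach: the identical three-way split of the maps $\alpha$ according to the values of $\alpha(v)$ and $\alpha|_{N(v)}$, with case (a) giving $Y_{G-v}$, case (b) giving $(Y_v-1)Y_{G-N[v]}$, and case (c) left as the error term. The paper states the verification of (a) and (b) as "straightforward"; you have simply supplied the routine bijection and weight-factorization details, including the key identity $\sum_{k\ge 1}e_k(\mathbf{x})=\prod_i(1+x_i)-1=Y_v-1$, all of which are accurate.
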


\begin{proof}
In order to prove this proposition we only need to divide the $\alpha$'s into the following three classes:
\begin{itemize}
    \item[(1)] $\alpha(v) = 0$;
    \item[(2)] $\alpha(v) \ge 1$ and $\alpha(u) = 0$ for all $u \in N(v)$;
    \item[(3)] $\alpha(v) \ge 1$ and $\alpha(u) \ge 1$ for some $u \in N(v)$.
\end{itemize}
Then it is straightforward to verify that the sum of all $X_G^{\alpha}$ in (1) is exactly $Y_{G-v}$. Similarly the $\alpha$'s in (2) correspond to $(Y_v-1)Y_{G-N[v]}$ since $X_v^{\alpha(v)} = 1$ for $\alpha(v) = 0$. Then the proof follows.
\end{proof}

\noindent\textbf{Remark.} The ``error term''  $\sum_{\alpha} X_G^{\alpha}$ in this result to some extent illustrates why the real-rootedness (or log-concavity) of $G-v$ and $G-N[v]$ cannot directly yields that of $G$.

In the special case of clan graphs we have a slightly different result.
\begin{prop}\label{prop-rec-clan}
Let $v$ be a leaf vertex of $G$ and $uv$ be the corresponding leaf edge. Let $G_{v}^n$ be the clan graph of $G$ obtained by replacing $v$ by a complete graph $K_n (n \ge 2)$. Then
\[
Y_{G_v^n} = Y_{G-v} + (Y_{K_{n}}-1)Y_{G-\{u,v\}} + \sum_{\alpha}X_{G_v^n}^{\alpha},
\]
where $\alpha$ ranges over all maps from $V(G)$ to $\mathbb{N}$ such that $\alpha(u) \ge 1$ and $\alpha(v_i) \ge 1$ for some $1 \le i \le n$.
\end{prop}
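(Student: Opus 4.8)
The plan is to mimic the proof of Proposition~\ref{prop-rec}, partitioning the maps $\alpha\colon V(G_v^n)\to\mathbb N$ according to their behaviour on the special pair $\{u,v\}$, where here $v$ has been blown up into $K_n$ so ``$\alpha(v)$'' really means the tuple $(\alpha(v_1),\dots,\alpha(v_n))$ of values on the $n$ vertices of $K_n$. First I would split the sum $Y_{G_v^n}=\sum_\alpha X_{G_v^n}^\alpha$ into three classes: (1) $\alpha(v_i)=0$ for all $1\le i\le n$; (2) $\alpha(v_i)\ge 1$ for some $i$ but $\alpha(u)=0$; (3) $\alpha(u)\ge 1$ and $\alpha(v_i)\ge 1$ for some $i$. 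Class (3) is by definition the error term $\sum_\alpha X_{G_v^n}^\alpha$ in the statement, so the content is in identifying classes (1) and (2).

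For class (1), every vertex of the $K_n$ receives the empty set, so a multicoloring of type $\alpha$ of $G_v^n$ is literally the same as a multicoloring of type $\alpha|_{G-v}$ of $G-v$ (the $K_n$ contributes nothing to the monomial $x_1^{a_1}x_2^{a_2}\cdots$), whence $\sum_{\alpha\text{ in }(1)} X_{G_v^n}^\alpha = Y_{G-v}$. This is the exact analogue of class (1) in Proposition~\ref{prop-rec}. For class (2), since $v$ is a leaf, $u$ is the unique neighbour of the block $K_n$; the condition $\alpha(u)=0$ means $u$ is uncolored, so the colors placed on $K_n$ are unconstrained by the rest of $G$ (the only edges leaving $K_n$ go to $u$), and the colors on $G-\{u,v\}$ range over all multicolorings of that graph independently. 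Using the product rule $X_{H_1+H_2}^\alpha=X_{H_1}^\alpha X_{H_2}^\alpha$ and the fact that the contribution of the $K_n$-block alone, summed over all nonzero color-assignments to its vertices, is $Y_{K_n}-1$ (the $-1$ removing the all-empty assignment, which is excluded from class (2)), I get $\sum_{\alpha\text{ in }(2)} X_{G_v^n}^\alpha = (Y_{K_n}-1)\,Y_{G-\{u,v\}}$. Adding the three pieces gives the claimed identity.

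The one genuinely delicate point — and the step I expect to be the main obstacle — is verifying that in class (2) the colorings on the $K_n$-block and on $G-\{u,v\}$ truly decouple as a product, i.e. that deleting $u$ (equivalently, forcing $\alpha(u)=0$) severs $K_n$ from the rest of $G$. This uses crucially that $v$ is a leaf of $G$, so in $G_v^n$ the block $K_n$ is joined only to $u$; hence once $u$ is uncolored, a proper multicoloring of $G_v^n$ restricted to class (2) is exactly a pair consisting of a proper nonempty coloring of $K_n$ and an arbitrary proper multicoloring of $G-\{u,v\}$, with the generating functions multiplying. I would also note the harmless subtlety that $Y_{K_n}$ and $X_{K_n}^\alpha$ are defined with the $K_n$ regarded as a clan (no extra $\prod\alpha(v)!$ normalization is needed here since we are summing $X^\alpha$ over all $\alpha$, matching the definition of $Y$), so no factorial corrections intrude. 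Everything else is the same bookkeeping as in the proof of Proposition~\ref{prop-rec}, and I would present it in the same concise style.
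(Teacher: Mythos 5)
Your proof is correct and follows essentially the same approach as the paper: partitioning the maps $\alpha$ on $V(G_v^n)$ into the three classes according to whether the $K_n$-block is entirely uncolored, colored but with $\alpha(u)=0$, or colored with $\alpha(u)\ge 1$, then identifying the first two classes with $Y_{G-v}$ and $(Y_{K_n}-1)Y_{G-\{u,v\}}$ via the product rule for disjoint unions. The paper states this same trichotomy and then simply refers back to the proof of Proposition~\ref{prop-rec}; your write-up just spells out the decoupling argument in class (2) (which hinges, as you note, on $v$ being a leaf so that $K_n$ meets the rest of the graph only at $u$) in more detail.
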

\begin{proof}
Here we regard the vertices in $K_n$ as a whole and label them as $\{v_1,v_2,\ldots,v_n\}$. The we divide the $\alpha$'s according to the following three cases:
\begin{itemize}
    \item[(1)] $\alpha(v_i) = 0$ for any $1 \le i \le n$;
    \item[(2)] $\alpha(u) = 0$ and $\alpha(v_i) \ge 1$ for some $1 \le i \le n$;
    \item[(3)] $\alpha(u) \ge 1$ and $\alpha(v_i) \ge 1$ for some $1 \le i \le n$.
\end{itemize}
The remaining part of the proof is exactly similar to that of Proposition \ref{prop-rec}.
\end{proof}

In general the 2-$s$-positivity of $Y_{G-v}$ and $Y_{G-\{u,v\}}$ do not imply the 2-$s$-positivity of $Y_{G_v^n}$ since $\sum_{\alpha}X_{G_v^n}^{\alpha}$ in Proposition \ref{prop-rec-clan} may not be 2-$s$-positive. However, in some special cases we are able to eliminate the negative terms in these $X_{G_v^n}^{\alpha}$'s. As an example, we proceed to show that each pineapple graph has a log-concave independence polynomial. Precisely, for any integer $n \ge 2$ and partition $\lambda$, the pineapple graph $Pi(n,\lambda)$ is defined to be the graph obtained by identifying a vertex of the complete graph $K_n$ with the torso of the spider $S(\lambda)$. In addition, we label the torso as $u$ and the other vertices in the complete graph $K_n$ as $v_1,\ldots,v_{n-1}$. as illustrated in Figure \ref{fig-pineapple}. 
Then we have the following result.
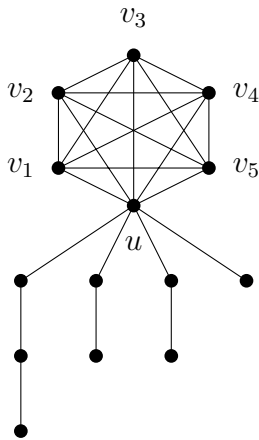
\begin{figure}[htbp]
\centering
\begin{tikzpicture}[scale = 1]

\fill (-6,0) circle (0.5ex);
\fill (-3,0) circle (0.5ex);
\fill (-4,0) circle (0.5ex);
\fill (-4,-1) circle (0.5ex);

\fill (-4.5,1) circle (0.5ex);
\fill (-5,0) circle (0.5ex);
\fill (-6,-1) circle (0.5ex);
\fill (-6,-2) circle (0.5ex);
\fill (-5,-1) circle (0.5ex);
\draw (-4.5,1) -- (-6,0) -- (-6,-1) -- (-6,-2);
\draw (-4.5,1) -- (-5,0) -- (-5,-1);
\draw (-4.5,1) -- (-4,0) -- (-4,-1);
\draw (-4.5,1) -- (-3,0);

\fill (-5.5,1.5) circle (0.5ex);
\fill (-3.5,1.5) circle (0.5ex);
\fill (-5.5,2.5) circle (0.5ex);
\fill (-3.5,2.5) circle (0.5ex);
\draw (-4.5,3) -- (-5.5,2.5)  --(-5.5,1.5) -- (-4.5,1) -- (-3.5,1.5) -- (-3.5,2.5) -- (-4.5,3);
\draw (-5.5,2.5) -- (-3.5,2.5) -- (-5.5,1.5) --(-3.5,1.5) --(-5.5,2.5) -- (-4.5,1) -- (-3.5,2.5) ;
\fill (-4.5,3) circle (0.5ex);
\draw (-4.5,1) -- (-4.5,3);
\draw (-3.5,1.5) -- (-4.5,3) -- (-5.5,1.5);
\node at (-4.5,0.5) {$u$};
\node at (-6,1.5) {$v_1$};
\node at (-6,2.5) {$v_2$};
\node at (-4.5,3.5) {$v_3$};
\node at (-3,2.5) {$v_4$};
\node at (-3,1.5) {$v_5$};
\end{tikzpicture}\caption{Pineapple graph $Pi(6,(3,2,2,1))$}\label{fig-pineapple}
\end{figure}

\begin{thm}\label{thm-pineapple-lc}
For all $n \ge 1$ and any partition $\lambda$, the independence polynomial of $Pi(n,\lambda)$ is log-concave.
\end{thm}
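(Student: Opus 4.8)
The plan is to feed the pineapple graph into the clan-graph recurrence of Proposition~\ref{prop-rec-clan}, taking $G$ to be the spider $S(\lambda)$ with an extra leaf $v$ pendant to its torso $u$, so that $G_v^n$ is exactly $Pi(n,\lambda)$ (up to relabeling the legs; the $n=1,2$ cases reduce to a path appended to the torso and are covered by Theorem~\ref{thm-spi-lc}, so assume $n\ge 3$). This gives
\[
Y_{Pi(n,\lambda)} = Y_{S(\lambda)} + (Y_{K_n}-1)Y_{S(\lambda)-u} + \sum_{\alpha}X_{Pi(n,\lambda)}^{\alpha},
\]
where $\alpha$ ranges over maps with $\alpha(u)\ge 1$ and $\alpha(v_i)\ge 1$ for some $1\le i\le n-1$. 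By Theorem~\ref{thm-spi-lc}, $Y_{S(\lambda)}\ge_{2s}0$; moreover $S(\lambda)-u$ is a disjoint union of paths, so $Y_{S(\lambda)-u}\ge_{2s}0$ by Lemma~\ref{lem-2s-prod} and Corollary~\ref{cor-2s-con-bipar}, and $Y_{K_n}$ is a sum of $X_{K_n}^{\alpha}$'s each of which, being a clan graph of $K_n$, is again a complete graph, hence $2$-$s$-zero unless it is a single vertex; in any case $(Y_{K_n}-1)$ is a sum of monomial-type terms that are $2$-$s$-nonnegative. So the whole burden is to show $\sum_{\alpha}X_{Pi(n,\lambda)}^{\alpha}\ge_{2s}0$.

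The key step is a case analysis of the clan graphs $Pi(n,\lambda)^{\alpha}$ exactly parallel to Step~1 in the proof of Theorem~\ref{thm-spi-lc}. Writing $\alpha_u=\alpha(u)$, $\alpha_{v_i}=\alpha(v_i)$, and $\alpha_i=\alpha(v_i)$ for the spider legs: if $\alpha_u\ge 2$, or if $\alpha_u=1$ together with $\alpha_{v_i}\ge 2$ for some $i$, or $\alpha_{v_i}\ge 2$ and $\alpha_{v_j}\ge 1$ with $i\ne j$, or if two distinct $K_{\alpha_{v_i}},K_{\alpha_{v_j}}$ are both nonempty (they are mutually adjacent inside $K_n$), then there is a triangle and $X^{\alpha}_{Pi(n,\lambda)}=_{2s}0$ by Corollary~\ref{cor-2s-con-bipar}; these contribute nothing. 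In the summation constraint we need $\alpha_u\ge 1$ and some $\alpha_{v_i}\ge 1$, so the only surviving configurations have $\alpha_u=1$, exactly one index $i$ with $\alpha_{v_i}=1$ and all other $\alpha_{v_j}=0$, and $\alpha_i=0$ on all legs of the spider (else a triangle with $u$). The component $C_0$ containing $u$ is then again a spider: its center $u$ has an edge to the single leftover vertex $v_i$ of $K_n$, plus one surviving sub-leg of odd length along each spider leg where $\beta_i$ (the truncated length) is odd, and the $v_i$-edge is just a leg of length $1$. So $C_0$ is a spider $S(\beta')$ whose legs are the odd $\beta_i$'s together with one extra leg of length $1$, and the rest of $Pi(n,\lambda)^{\alpha}$ is a disjoint union of clan graphs of paths, hence $2$-$s$-positive or not bipartite. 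This reduces everything, as in Observation~\ref{obs-2s}, to the spider $C_0$.

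At this point I would reuse the injection technology of Theorem~\ref{thm-spi-lc}. The maps $\alpha$ for which $C_0$ is non-$2$-$s$-positive are those whose bipartition has difference $\ge 2$, forcing $C_0=S(\beta')$ to have at least three odd legs; since one of the odd legs is the distinguished length-$1$ leg coming from the vertex $v_i\in K_n$, this means the truncated spider part $S(\beta)$ must itself have at least two odd legs. I would define an injection $\psi$ from this bad set into the set of maps with $\alpha_{v_i}=0$ for all $i$ (where $\sum X^\alpha$ is automatically $2$-$s$-positive), by deleting the vertex $u$ and the single vertex $v_i$ and re-encoding one short odd leg of $S(\beta)$ as $K_2$'s along another odd leg, mimicking case~(i)/(ii) of $\phi$; the extra length-$1$ leg plays the book-keeping role that the torso played before, which makes $\psi$ injective. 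Then I would check $X^{\alpha}_{Pi}+X^{\psi(\alpha)}_{Pi}\ge_{2s}0$ by the same stable-partition count via Proposition~\ref{prop-2s-coe}: $C_0$ contributes $s_{(p,q)}-s_{(p-1,q+1)}$ for its unique length-$2$ stable partition, and one verifies the coefficient of $s_{(p-1,q+1)}$ in $X^{\psi(\alpha)}$ restricted to its $u$-component is $\ge 1$ because there are $a\ge 3$ odd legs and the relevant count comes out $(a-2)2^{b}\ge 1$.

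The main obstacle I anticipate is bookkeeping in the injection $\psi$: in the spider case the torso $v_0$ was a canonical ``extra leg of length zero'' that $\phi$ consumed; here the analogous role is played by the length-$1$ leg built from $v_i\in K_n$, and one must be careful that after deleting $u$ the component structure is exactly a spider with that extra length-$1$ leg, that $\psi$ lands in case~\textbf{(6)}-type maps (i.e.\ $\alpha(u)=0$ and no $v_i$ used), and that the preimage is recoverable from the pattern of consecutive $K_2$'s exactly as in Step~2 of Theorem~\ref{thm-spi-lc}. Modulo that, every inequality is the same $(a-2)2^b\ge 1$ computation, so no genuinely new estimate is needed, and the conclusion follows from Corollary~\ref{cor-2s-lc}.
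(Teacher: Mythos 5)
Your overall strategy (Proposition~\ref{prop-rec-clan} plus an injection borrowed from the spider argument) is the paper's strategy, but there are two genuine gaps in your proposal that would sink the proof as written.

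First, the target of your injection is wrong, and this is not a cosmetic issue. You propose mapping the bad $\alpha$'s (with $\alpha(u)=1$, $\alpha(v_i)=1$) into ``the set of maps with $\alpha_{v_j}=0$ for all $j$,'' that is, the maps contributing to $Y_{S(\lambda)}$ via case~(1) of the recurrence. But that positive content is already fully committed: Theorem~\ref{thm-spi-lc} shows $Y_{S(\lambda)}\ge_{2s}0$ precisely by an internal injection that pairs its own bad $\alpha$'s against its good ones, and the key estimate $(a-2)2^b\ge 1$ is tight when $a=3,b=0$, so there is no slack left for you to spend a second time. The paper instead sets $\phi(\alpha)(u)=0$ and $\phi(\alpha)(v_i)=2$, keeping $\alpha(v_i)\ge 1$, so the image lands in case~(2) of Proposition~\ref{prop-rec-clan}, i.e.\ in $(Y_{K_{n-1}}-1)Y_{S(\lambda)-u}$. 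This is fresh, untapped $2$-$s$-positive content (products of paths with a small complete-graph factor), and the injection is literally case~(i) of the spider map $\phi$ applied to $S((\lambda,1))$, viewing $uv_i$ as a leg of length~$1$. Relatedly, your ``delete $u$ and $v_i$, then re-encode one odd leg as $K_2$'s along another'' does not conserve the number of vertices: the spider map deletes the torso ($-1$) and compensates by turning a leg vertex into a $K_2$ ($+1$); if you additionally zero out $v_i$ you are short one vertex, so $X^{\alpha}$ and $X^{\psi(\alpha)}$ live in different degrees and the cancellation cannot even be stated.

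Second, your Step~1 case analysis contains the false claim ``$\alpha_i=0$ on all legs of the spider (else a triangle with $u$).'' Taking $\alpha(u)=1$, $\alpha(v_i)=1$, and $\alpha=1$ on the first vertex of some spider leg produces no triangle ($v_i$ is not adjacent to the spider leg), only a tree. You implicitly retract this a few lines later when you describe $C_0$ as a spider with surviving sub-legs, but the slip matters: if the legs really had to vanish, $C_0$ would be a single edge, always balanced, and there would be nothing to prove. Also a small indexing point: with $G=S((\lambda,1))$ the clan graph giving $Pi(n,\lambda)$ is $G_v^{n-1}$, so the recurrence term is $Y_{K_{n-1}}$, not $Y_{K_n}$. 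With the injection retargeted to $(Y_{K_{n-1}}-1)Y_{S(\lambda)-u}$ via $\phi(\alpha)(v_i)=2$ and the case analysis corrected to $\alpha\le 1$ on the first leg vertices, the rest of your argument (the $(a-2)2^b\ge 1$ count) goes through exactly as in the paper.
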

\begin{proof}
 In fact $Pi(n,\lambda) = S((\lambda,1))_v^{n-1}$, where $S((\lambda,1))$ is the graph obtained by attaching an edge $uv$ to the torso $u$ in $S(\lambda)$. Taking $G=S((\lambda,1))$ in Proposition \ref{prop-rec-clan} leads to
\[
Y_{Pi(n,\lambda)} = Y_{S(\lambda)} + (Y_{K_{n-1}}-1)Y_{S(\lambda)-u} + \sum_{\alpha}X_{Pi(n,\lambda)}^{\alpha},
\]
where $\alpha(u) \ge 1$ and $\alpha(v_i) \ge 1$ for some $1 \le i \le n-1$.
Then we analyze these $\alpha$'s more precisely.
\begin{itemize}
  \item[(1)] $\alpha(u) \ge 1$, $\alpha(v_i)\ge 1$ and $\alpha(v_j)\ge 1$ for some $1 \le i < j \le n-1$: $X_{{Pi(n,\lambda)}^{\alpha}} =_{2s} 0$.
  \item[(2)] $\alpha(u) \ge 1$, $\alpha(v_i) \ge 1$ for some $1 \le i \le n-1$ and $\alpha(v_j) = 0$ for any $1 \le j \le n-1$ and $j \neq i$:
      \begin{itemize}
        \item[(2.1)] $\alpha(u) \ge 2 $ or $\alpha(v_i) \ge 2$. In this subcase $X_{{Pi(n,\lambda)}^{\alpha}} =_{2s} 0$.
        \item[(2.2)] $\alpha(u) = \alpha(v_i) = 1$. In this subcase $X_{{Pi(n,\lambda)}^{\alpha}}$ is (possibly) not 2-$s$-positive.
      \end{itemize}
\end{itemize}

Secondly, we shall define an injection from \textbf{(2.2)} to the $\alpha$'s in $(Y_{K_{n-1}}-1)Y_{S(\lambda)-u}$, where we directly let $\phi(\alpha)(u) = 0$ and $\phi(\alpha)(v_i) = 2$. Actually, this is case (i) of the map $\phi$ (defined in proof of Theorem \ref{thm-spi-lc}) restricted to the spider $S((\lambda,1))$, where we regard $uv_i$ as a leg of $u$. Then by the proof of Theorem \ref{thm-spi-lc} all the negative terms in $\sum_{\alpha}X_{Pi(n,\lambda)}^{\alpha}$ will be eliminated. In addition, the terms not being images of $\phi$ are also 2-$s$-positive since $S(\lambda)-u$ consists of only paths.  Then the proof follows since $Y_{S(\lambda)}$ is 2-$s$-positive. %
\end{proof}

\vspace{0.5cm}

\noindent{\bf Acknowledgments.}
Ethan Li is supported by the Fundamental Research Funds for the Central Universities (GK202207023).
Arthur Yang is supported by the National Science Foundation of China (12325111).

\printbibliography

\end{document}